\documentclass[a4paper,12pt]{amsart}

\usepackage{amsmath,amsthm,amsfonts,amssymb,amscd,enumerate,ae,arydshln}
\usepackage[latin1]{inputenc}
\usepackage{graphicx}                
\usepackage[makeroom]{cancel}       
\usepackage{multirow}               
\usepackage[toc,page]{appendix}
\usepackage{color}






\newcommand{\F}{\mathbb{F}}

\renewcommand{\leq}{\leqslant}
\renewcommand{\geq}{\geqslant}

\newcommand{\omb}{\overline{\Omega}} 
\newcommand{\normal}{\trianglelefteq}

\DeclareMathOperator{\sym}{Sym}
\DeclareMathOperator{\aut}{Aut}
\DeclareMathOperator{\soc}{soc}

\DeclareMathOperator{\po}{P\Omega}
\DeclareMathOperator{\wre}{wr}

\DeclareMathOperator{\out}{Out}

\DeclareMathOperator{\supp}{supp}

\DeclareMathOperator{\hol}{Hol}
\newcommand{\comp}[2]{#1^{(#2)}}


\newcommand{\ves}{\vspace*{.4cm}}
\newcommand{\ir}{\underline{r}}

\newcommand{\il}{\underline{l}}


\newtheorem{coro}{Corollary}[section]
\newtheorem{lemma}[coro]{Lemma}

\newtheorem{teor}[coro]{Theorem}

\theoremstyle{definition}
\newtheorem{defi}[coro]{Definition}

\newtheorem{exem}[coro]{Example}


\oddsidemargin 0pt
\evensidemargin 0pt
\textheight 8.2in \textwidth 6.3in

\relpenalty=10000
\binoppenalty=10000
\tolerance=500

\newcommand{\cent}[2]{{\mathbb C}_{#1}(#2)}
\newcommand{\norm}[2]{{\mathbb N}_{#1}(#2)}

\newcommand{\interv}[1]{\underline{#1}}
\title[Transitive characteristically simple subgroups]{Transitive characteristically simple subgroups of finite quasiprimitive permutation groups}
\author{Pedro H. P. Daldegan}
\author{Csaba Schneider}

\address[C. Schneider]{Departamento de Matem\'atica\\
Instituto de Ci\^encias Exatas\\
Universidade Federal de Minas Gerais\\
Av.\ Ant\^onio Carlos 6627\\
Belo Horizonte, MG, Brazil\\
csaba@mat.ufmg.br\\
 www.mat.ufmg.br/$\sim$csaba}

\address[P. Daldegan]{Departamento de Matem\'atica\\
Centro Federal de Educação Tecnol\'ogica de Minas Gerais\\
Av. Amazonas 7675\\
Belo Horizonte, MG, Brazil\\
phpdaldegan@cefetmg.br\\
sig.cefetmg.br/sigaa/public/docente/portal.jsf?siape=1395846}

\begin{document}

\dedicatory{Dedicated to the memory
of Charles Sims}

\begin{abstract}
  The first main result of this paper is that
  a finite transitive nonabelian characteristically simple subgroup of a wreath product in
  product action must lie in the base group of the wreath product.
  This allows us to characterize nonabelian transitive characteristically
  simple subgroups $H$ of finite quasiprimitive permutation groups $G$.
  If the socle of $G$, denoted by $\soc(G)$, is nonabelian, then $H$ lies in $\soc(G)$.
  An explicit description is given for the possibilities of $H$ under the
  condition that $H$ does not contain a nontrivial normal subgroup of $\soc(G)$.
\end{abstract}

\keywords{Quasiprimitive groups, O'Nan-Scott classification,
  characteristically simple groups, wreath products in product action}

\subjclass[2010]{20B05, 20B10, 20B15, 20B25, 20B35}

\thanks{The research presented in this paper formed a part of the first author's
  PhD project under the supervision by the second author. Daldegan was
  supported by a PhD scholarship awarded by the CAPES and  CNPq,
  while Schneider acknowledges
  the support of the CNPq grants {\em Universal} (project no.: 421624/2018-3)
  and {\em Produtividade em Pesquisa} (project no.: 308773/2016-0). We thank the referees for their careful reading of the manuscript.}

\maketitle

\section{Introduction}\label{sect:intro}

The characterization of transitive subgroups of finite permutation groups
is an important problem with several applications in algebraic combinatorics
and algebraic graph theory. Without the ambition of being exhaustive,
we recall some results in this research area.
Finite primitive permutation groups with a
transitive abelian subgroup were classified by Li~\cite{liabelian}, while finite
permutation groups with a transitive cyclic subgroup were described by
Li and Praeger~\cite{praegercyclic}.
Regular subgroups of finite primitive permutation groups
were classified by Liebeck, Praeger and Saxl~\cite{liebeckregular}
and by Baumeister~\cite{baumeisterregular}. A general description
of transitive subgroups of finite primitive
permutation groups was given by Liebeck, Praeger and
Saxl~\cite{liebecktransitive}.

In this paper we focus on transitive
nonabelian characteristically simple subgroups of finite quasiprimitive
permutation groups. 
Transitive simple subgroups of wreath products in
product action were classified by the second author in collaboration with
Baddeley and Praeger in~\cite{cartesiancsaba}.
The examples of~\cite{cartesiancsaba} consisted of two isolated groups and
of two infinite families.
Furthermore, it was proved by Baddeley and Praeger
in~\cite[Theorem~1.4]{badprgprimover}, that if $H$ is a nonabelian, nonsimple,
characteristically simple subgroup of a finite quasiprimitive almost simple
permutation group $G$, then either $H$ is intransitive or
$\soc(G)$ is an alternating group in its
natural action.
The
scarcity of such examples  gave the hope that
nonabelian characteristically simple subgroups of finite quasiprimitive groups
could be described.
This goal is achieved in this paper.

It is an easy consequence of the divisibility relations of
Lemma~\ref{prime} that if
$T$ is a transitive nonabelian simple subgroup of a wreath product in
product action, then $T$ is always contained in the base group of the wreath
product. Our first theorem generalizes this result to 
transitive characteristically simple groups.

\begin{teor}\label{imbase} 
  Let $\Gamma$ be a finite set such that $|\Gamma|\geq 2$, let $r\geq 2$, and let $W=\sym(\Gamma)\wre S_r$ be considered as
  a permutation group on $\Omega=\Gamma^r$ in product action. If $H$ is a transitive nonabelian characteristically
  simple subgroup of $W$, then $H$ is a subgroup of the base group; that is, $H\leq \sym(\Gamma)^r$.
\end{teor}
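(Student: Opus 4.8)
The plan is to argue by induction on $|\Omega|=|\Gamma|^r$ and, within that, on $|\Gamma|$, assuming for contradiction that $H\not\leq\sym(\Gamma)^r$. Since $H$ is characteristically simple and nonabelian, $H\cong T^k$ for a nonabelian simple group $T$, and because the only normal subgroups of $T^k$ are the subproducts of the factors, every quotient of $H$ is again a direct power of $T$. Let $\pi\colon W\to S_r$ be the projection onto the top group, so that $\sym(\Gamma)^r=\ker\pi$, and set $K=\pi(H)$ and $M=H\cap\sym(\Gamma)^r$; then $K\cong H/M\cong T^{k'}$ with $k'\geq 1$ by assumption, and $M\cong T^{k-k'}$. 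The first step is a reduction to the case in which $K$ is transitive on $\{1,\dots,r\}$: if the $K$-orbits there are $\Delta_1,\dots,\Delta_t$ with $t\geq 2$, then $H$ lies in $\prod_i\bigl(\sym(\Gamma)\wr\sym(\Delta_i)\bigr)$ acting on $\prod_i\Gamma^{\Delta_i}=\Omega$, each projection $H_i$ of $H$ is a transitive nonabelian characteristically simple subgroup of $\sym(\Gamma)\wr\sym(\Delta_i)$ in product action, and induction (as $|\Delta_i|<r$) gives $H_i\leq\sym(\Gamma)^{\Delta_i}$, hence $H\leq\sym(\Gamma)^r$, a contradiction.

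So assume $K$ is transitive on $\{1,\dots,r\}$. Writing $\rho_j\colon\sym(\Gamma)^r\to\sym(\Gamma)$ for the $j$-th coordinate projection, a direct computation in $W$ gives $\rho_{\sigma(j)}(hbh^{-1})=f_{\sigma(j)}\,\rho_j(b)\,f_{\sigma(j)}^{-1}$ for $h=(f;\sigma)\in H$ and $b\in M$. Since each simple factor $T_i$ of $M$ is normal in $H$, its support $\{\,j:\rho_j(T_i)\neq 1\,\}$ is $K$-invariant, hence equals $\{1,\dots,r\}$ by transitivity of $K$ (it cannot be empty, as $\bigcap_j\ker\rho_j=1$ on $\sym(\Gamma)^r$). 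Feeding this back into the commutation relations, all maps $\rho_j|_M$ are injective and pairwise equal up to inner automorphisms of $\sym(\Gamma)$, so after conjugating $H$ by an element of $\sym(\Gamma)^r$ we may assume $M$ acts diagonally through a fixed subgroup $R=\rho_1(M)\cong T^{k-k'}$ of $\sym(\Gamma)$. Writing $H=M\times C$ with $C\cong T^{k'}$ (possible since $H\cong T^k$), the direct product structure forces $C$ to centralise $M$, and then the commutation relations force every component of every element of $C$ to lie in $\cent{\sym(\Gamma)}{R}$. Hence $H\leq M\times\bigl(\cent{\sym(\Gamma)}{R}\wr S_r\bigr)$, and this group must itself be transitive on $\Gamma^r$.

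A short argument with the multisets of $\cent{\sym(\Gamma)}{R}$-orbits of the coordinates shows that $M\times\bigl(\cent{\sym(\Gamma)}{R}\wr S_r\bigr)$ is transitive on $\Gamma^r$ only if $\cent{\sym(\Gamma)}{R}$ is transitive on $\Gamma$, equivalently only if $R$ is semiregular. In that case the $R$-orbits form a block system on $\Gamma$ with $t_0=|\Gamma|/|T|^{k-k'}$ blocks, inducing an $H$-invariant block system on $\Omega$ with $t_0^{\,r}$ blocks; the induced action of $H$ factors through a characteristically simple subgroup of $S_{t_0}\wr S_r$ acting transitively on $\{1,\dots,t_0\}^r$ in product action, whose image in $S_r$ is still $K\neq 1$. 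If $t_0\geq 2$ this contradicts the inductive hypothesis (applied with $t_0^{\,r}<|\Gamma|^r$), while the possibility that this induced action is trivial is excluded because $C$ meets the base group of $\cent{\sym(\Gamma)}{R}\wr S_r$ trivially.

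This leaves the case $t_0=1$, that is, $R$ regular on $\Gamma$ and $|\Gamma|=|T|^{k-k'}$, where the block reduction degenerates; I expect this to be the main obstacle. Here one first reduces to $k-k'=1$ by re-coordinatising $\Gamma\cong T^{\,k-k'}$ factorwise and passing to the resulting product action of $H$ on a Cartesian power of $T$ with a strictly smaller component set, so that $|\Gamma|$ decreases. One is then left with $|\Gamma|=|T|$, with $R$ and $\cent{\sym(\Gamma)}{R}$ both regular copies of $T$, and transitivity of $H$ becomes equivalent to the existence of a surjective cocycle from $K$ to the base group $\cent{\sym(\Gamma)}{R}^r$. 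Ruling this out uses the divisibility relations of Lemma~\ref{prime}, together with the constraints that a transitive copy of $T^{k'}$ inside $S_r$ imposes on $r$ and $k'$; carrying this regular–diagonal case through cleanly is the crux of the proof.
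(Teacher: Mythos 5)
The decisive step is missing. Your reduction to the case where $K=H\pi$ is transitive on $\interv r$, and your identification of $M=H\cap\sym(\Gamma)^r$ as a diagonally embedded copy of $R=\rho_1(M)$ with the complement $C$ landing in $\cent{\sym(\Gamma)}{R}\wr S_r$, track the paper's Claims 1 and 2 closely. But everything after that is a detour that terminates exactly where the real work begins: the case you call ``the crux'' ($R$ regular, so the $M$-orbits on $\Gamma^r$ are the $|\Gamma|^{r-1}$ translates of a diagonal) is not a degenerate corner case --- it is the whole theorem, and you do not prove it. The paper's resolution (Claims 3 and 4) is short but is a genuine argument: each $M$-orbit has size exactly $|\Gamma|$, so there are $|\Gamma|^{r-1}$ of them; $C\cong K$ permutes these orbits transitively and acts faithfully on $\interv r$, so $|\Gamma|^{r-1}$ divides $r!$; and since $\Gamma$ is an orbit of a nonabelian characteristically simple group, $|\Gamma|\geq 5$, so $|\Gamma|$ is divisible by an odd prime or by $4$, contradicting Lemma~\ref{prime}. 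Your sketch gestures at Lemma~\ref{prime} and at ``constraints that a transitive copy of $T^{k'}$ inside $S_r$ imposes,'' but the needed input is precisely this orbit count, and it must be written out; the preliminary ``re-coordinatisation to $k-k'=1$'' is both delicate and unnecessary for it.

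Two further problems with the scaffolding. First, the semiregular/block-system induction breaks down when $M=1$: then $R=1$, $t_0=|\Gamma|$, the induced block system is the trivial one, and $t_0^{\,r}=|\Gamma|^r$, so the inductive hypothesis does not apply. (This case can be excluded --- $H\cong K$ would embed in $S_r$ while acting transitively on $\Gamma^r$, forcing $2^r\mid r!$, against Lemma~\ref{prime} --- but you must say so.) Second, the entire semiregular detour is avoidable: by Theorem~\ref{thtrans} (quoted from \cite{embedding}), transitivity of $H$ on $\Gamma^r$ together with transitivity of $K$ on $\interv r$ already forces every component of $M$ to be transitive on $\Gamma$; combined with your own observation that $\cent{\sym(\Gamma)}{R}$ must be transitive (hence $R$ semiregular), this makes $R$ regular at once and lands you directly in the ``crux'' case. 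If you prefer not to quote that theorem, you need some substitute for it, since your support argument only gives injectivity of the maps $\rho_j|_M$, not transitivity of $R$ on $\Gamma$.
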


Since finite quasiprimitive permutation groups are often contained in
wreath products in product action, Theorem~\ref{imbase} leads to the following
more general result. 

\begin{teor}\label{main}
  Let $G$ be a finite quasiprimitive permutation group acting on
  $\Omega$ with nonabelian socle $S$ and let $H$ be a transitive nonabelian
  characteristically simple subgroup of $G$. Then $H\leq\soc(G)$.
\end{teor}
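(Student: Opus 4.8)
The plan is to combine Theorem~\ref{imbase} with the structure theorem for finite quasiprimitive permutation groups (the quasiprimitive analogue of the O'Nan--Scott theorem). Write $S=\soc(G)=T^k$, where $T$ is a nonabelian simple group and $k\geq 1$, and recall that the nonabelian characteristically simple group $H$ is a direct power $T_0^e$ of a nonabelian simple group $T_0$; in particular $H$ is perfect. Since $S$ has trivial centre, $\cent{G}{S}=1$, so conjugation embeds $G$ into $\aut(S)=\aut(T)\wre S_k$ with $S$ identified with $\inn(T)^k=\soc(\aut(T)\wre S_k)$ and $G/S$ with a subgroup of $\out(T)\wre S_k=\out(T)^k\rtimes S_k$. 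The subgroup $H\cap S$ is normal in $H$, hence a subproduct of the simple factors of $H$, so $H\cap S\cong T_0^f$ and $\overline{H}:=H/(H\cap S)\cong HS/S$ is a perfect subgroup isomorphic to $T_0^{e-f}$ of $G/S\leq\out(T)^k\rtimes S_k$; since the image of a perfect group in the soluble base $\out(T)^k$ is trivial, $\overline{H}$ embeds into $S_k$, and is in fact the permutation group induced by $H$ on the set of $k$ simple direct factors of $S$. The whole theorem reduces to proving $\overline{H}=1$: this says $H$ normalises each direct factor $T_i$ of $S$, so $H$ embeds into $\aut(T)^k$ with $H\cap S=H\cap\inn(T)^k$; then $H/(H\cap S)$ embeds into the soluble group $\out(T)^k$, and being perfect it is trivial, so $H=H\cap S\leq S$.

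Now invoke the O'Nan--Scott types of quasiprimitive groups. Since $S$ is nonabelian, $G$ is not of affine type, so $G$ has one of the types AS, HS, SD, TW, PA, CD, HC. If $G$ is of type AS or HS, then $G/S$ is itself soluble --- it embeds into $\out(T)$, respectively into $\out(T)\times S_2$ --- so the perfect group $\overline{H}$ is trivial and $H\leq S$. If $G$ is of type PA, CD or HC, then $G$ preserves a nontrivial Cartesian decomposition $\Omega=\Delta^m$ with $m\geq 2$ and $|\Delta|\geq 2$, hence embeds into $\sym(\Delta)\wre S_m$ acting in product action; by Theorem~\ref{imbase} the group $H$ lies in the base group $\sym(\Delta)^m$. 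Then, for each coordinate $i$, the projection $\pi_i(H)$ is a transitive nonabelian characteristically simple subgroup of the $\Delta$-component $U$ of $G$, which is quasiprimitive on $\Delta$ of type AS, SD or HS respectively and satisfies $\soc(G)=\soc(U)^m$. Granting the conclusion for $U$ --- types AS and HS were just settled, type SD is settled below --- we obtain $\pi_i(H)\leq\soc(U)$ for every $i$, whence $H\leq\prod_i\pi_i(H)\leq\soc(U)^m=\soc(G)$.

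What remains, and what I expect to be the main obstacle, is the direct treatment of the types SD and TW: here $G/S$ is not soluble and $\Omega$ carries no Cartesian decomposition, so Theorem~\ref{imbase} does not help, and one must play the transitivity of $H$ off against the rigid shape of the socle action. In the SD case the point stabiliser $S_\omega$ is a full diagonal subgroup of $S$ isomorphic to $T$; in the TW case $S$ acts regularly, so $S_\omega=1$ while $G/S$ is a ``twisted'' subgroup of $\out(T)\wre S_k$. Because $H\cap S\trianglelefteq H$ and $H$ is transitive, all $(H\cap S)$-orbits on $\Omega$ have a common length $o$, and $\overline{H}$ permutes these $|\Omega|/o$ orbits transitively; hence $|\Omega|/o$ divides $|\overline{H}|=|T_0|^{e-f}$, while $o$ divides $|T_0|^f$, the inequality $|S_\omega|\leq|T|$ bounds $o$ from below, and the embedding $\overline{H}\leq S_k$ forces $|T_0|^{e-f}\leq k!$. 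The hard step will be to combine these numerical constraints with the description of $H\cap S$ as a subdirect subgroup of $T_1\times\dots\times T_k$ whose factor-supports are permuted by $\overline{H}$, and thereby deduce that $e=f$; I expect this to use the divisibility relations of Lemma~\ref{prime} in the spirit of the proof of Theorem~\ref{imbase}, and possibly the classification of transitive simple subgroups of product-action wreath products from \cite{cartesiancsaba} to control the simple factors of $H$. Once $\overline{H}=1$ holds for the SD and TW types as well, all quasiprimitive types with nonabelian socle are covered, and $H\leq\soc(G)$ follows.
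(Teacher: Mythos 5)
Your overall framework---reduce to showing that $H$ acts trivially on the set of simple direct factors of $S$, settle types {\sc As} and HS by Schreier's Conjecture, and push {\sc Pa}, {\sc Cd} and HC down to the components of a product-action wreath product via Theorem~\ref{imbase}---matches the paper's strategy, and those parts are essentially sound (for quasiprimitive, non-primitive {\sc Pa} you must first pass to the faithful quotient action on $[S:P]$ with $P=S_\alpha\pi_1\times\cdots\times S_\alpha\pi_r$, since $\Omega$ itself need not carry an invariant Cartesian decomposition; $H$ remains transitive there, so the argument goes through). However, two of the cases you defer are not actually dealt with, and one of them is the heart of the theorem.

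First, the easy one: you assert that in type {\sc Tw} ``$\Omega$ carries no Cartesian decomposition,'' but that is false. There $S$ is regular, so $\Omega$ may be identified with $S=Q_1\times\cdots\times Q_r$, the stabilizer $S_\alpha=1$ trivially satisfies $S_\alpha=\prod_i(Q_i\cap S_\alpha)$, and $G$ embeds in a product-action wreath product to which Theorem~\ref{imbase} (via Corollary~\ref{embed}) applies, giving $H\leq S$ at once. Second, and decisively, the {\sc Sd} case is left unproved. The numerical constraints you list (a common $(H\cap S)$-orbit length $o$, the divisibility of $|\Omega|/o$ into $|\overline H|$, and $|\overline H|$ dividing $k!$) yield a direct contradiction only when $H\cap S=1$, where $|Q|^{r-1}\mid r!$ contradicts Lemma~\ref{prime}(3). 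When $H\cap S\neq 1$ the orbit count drops by a factor of $|T_0|^{f}/|(H\cap S)_\alpha|$ and these bounds are nowhere near sufficient. The paper's argument must in addition show that each projection $(H\cap S)\pi_i$ is trivial or isomorphic to $T_0$ (using that every finite simple group has a cyclic Sylow subgroup), decompose $H\cap S$ into pairwise disjoint strips, observe that $\overline H$ fixes each support setwise (because it centralizes $H\cap S$) and therefore acts trivially on every support of size less than $5$, and finally run a divisibility count of the shape $|Q|^{(d_1-1)+\cdots+(d_{m_1}-1)+m_3-1}\mid (d_1!)\cdots(d_{m_1}!)(m_3!)$ against Lemma~\ref{prime}. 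None of this appears in your proposal, and you explicitly flag it as ``the hard step''; as written, the proof is incomplete exactly where the real work lies.
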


Once we know by Theorem~\ref{main} that $H$ lies in the socle of $G$,
a more detailed description of the possibilities of $H$ can be given
considering the possible O'Nan--Scott classes of $G$. For the description
of the O'Nan--Scott classes of quasiprimitive permutation groups see
Section~\ref{quasiprim}, while for the terminology
related to strips see Section~\ref{sect:2}. 
Given a natural number $r$, the symbol $\interv r$ denotes $\{1,\ldots,r\}$.

\begin{teor}\label{theorem3}
 Let $G$ be a finite quasiprimitive permutation group acting on $\Omega$ and 
 let $S=Q_1\times\cdots\times Q_r$ be the socle of $G$,
 where each $Q_i$ is isomorphic to a  simple group
 $Q$. Assume that $T$ is a nonabelian finite simple group, and let
 $H\cong T_1\times\cdots\times T_k\cong T^k$
 be a transitive subgroup of $G$ with $k\geq 2$. If
 $H$ does not contain a minimal normal subgroup of $S$, then,
 after possibly relabeling the $T_i$, one of the
 following holds.
 \begin{enumerate}
   \item[(0)] $G$ has type {\sc HA}, $T\cong \mbox{SL}(3,2)$, $|\Omega|=8^k$.
\item $G$ has type {\sc As}, $S= A_n$ and $G=A_n$ or
  $G=S_n$ acting naturally on $n$ points with $n\geq 10$.
\item $G$ has type {\sc Sd} or {\sc HS},
  $k=r=2$ and
  $T_i<Q_i$, for $i=1,2$. Moreover, the groups
  $Q$ and $T$ are described in one of the rows of Table~\ref{factiso}.
  Furthermore, if
  $$
  S_\alpha=\{(q,q\alpha)\mid q\in Q_1\}
  $$
  for some isomorphism $\alpha:Q_1\rightarrow Q_2$, then
  $Q_1=T_1(T_2\alpha^{-1})$.
\item $G$ has type {\sc Pa} and
  one of the following is valid.
  \begin{enumerate}
  \item $k=r$ and $T_i<Q_i$ for all $i\in\interv r$.
  \item The $T_i$ are pairwise disjoint strips in $S$ with
    $|\supp(T_i)|\in\{1,2\}$ for all $i$, with $|\supp(T_i)|=2$ for at least one $i$, and the groups $T$ and $Q$ are as in one of the rows of
    Table~\ref{table2}.
  \item $S\cong(A_n)^r$, where $|\Omega|=n^r$ and $n\geq 10$.
    \end{enumerate}
\item $G$ has type {\sc Cd} or HC, $k=r$ and
  $T_i<Q_i$ for all $i$; the groups
  $Q$ and $T$  are described in one of the rows of Table~\ref{factiso}.
  Further, a point stabilizer $S_\alpha$ is the direct product of pairwise disjoint strips
  $D$ with $|\supp(D)|=2$. If
  $$
  D=\{(q,q\alpha)\mid q\in Q_i\}
  $$
  is a strip in $S_\alpha$ with some isomorphism $\alpha:Q_i\rightarrow Q_j$,
  then $Q_i=T_i(T_j\alpha^{-1})$.
\end{enumerate}
\end{teor}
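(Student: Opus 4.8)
The plan is to work through the O'Nan--Scott classes of $G$ listed in Section~\ref{quasiprim}, using Theorems~\ref{imbase} and~\ref{main} as the main engine and, in each class, reducing the transitivity of $H$ to a statement about subgroup factorisations or about subgroups of prime-power index. When $Q$ is nonabelian, $\soc(G)=S$ is nonabelian, so Theorem~\ref{main} gives $H\le S$; since each $T_i$ is simple and subdirect in the subproduct indexed by its support, it is a strip of $S$ (Section~\ref{sect:2}), and the $T_i$ commute pairwise. Three quick reductions handle the easy classes. First, if $G$ is almost simple then $r=1$ and $S=Q$, and since $H\cong T^k$ is nonabelian, nonsimple and transitive, \cite[Theorem~1.4]{badprgprimover} forces $\soc(G)=A_n$ acting naturally; a short argument with Goursat's lemma together with the fact that a nonabelian simple group has no proper subgroup of index below $5$ rules out $n\le 9$, giving case~(1). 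Second, $H=S$ is excluded by hypothesis, and in type {\sc Tw} the socle is regular, so $H\le S$ transitive would force $H=S$; hence {\sc Tw} does not occur. (The simple case $k=1$, treated in \cite{cartesiancsaba}, is not at issue here.)

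For the diagonal classes {\sc Sd} and {\sc Hs}, $S=Q_1\times\cdots\times Q_r$ carries a simple diagonal action whose point stabiliser $S_\alpha$ is a full diagonal strip, so transitivity of $H$ is exactly the condition $HS_\alpha=S$. I would first analyse how the supports of the strips $T_i$ sit relative to $S_\alpha$, using that no $T_i$ may be contained in a minimal normal subgroup of $S$ and the divisibility relations of Lemma~\ref{prime}; this should force $r=2$, each $T_i$ to lie in a single factor $Q_i$, and $k=2$, after which $HS_\alpha=S$ becomes the factorisation $Q_1=T_1(T_2\alpha^{-1})$ of $Q_1$ by two proper subgroups isomorphic to $T$. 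Feeding this into the classification of factorisations of finite almost simple groups -- in particular those with both factors characteristically simple -- leaves exactly the pairs $(Q,T)$ in Table~\ref{factiso}, which is case~(2). The classes {\sc Cd} and {\sc Hc} are product actions built from simple diagonal components on a set $\Sigma$: here Theorem~\ref{imbase}, applied to $\sym(\Sigma)\wr S_b$, places $H$ in the base group, each coordinate projection $\rho_i(H)\cong T^{k_i}$ is a transitive characteristically simple subgroup of a simple diagonal group, and by the previous analysis it is a product of two factors paired by a factorisation $Q_i=T_i(T_j\alpha^{-1})$; reassembling these over the $b$ components yields $k=r$ and case~(4).

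For type {\sc Pa} the component $A$ is almost simple on a set $\Delta$; Theorem~\ref{imbase} puts $H$ in the base group $A^\ell$, and with $H\le S=\soc(A)^\ell$ we get $H\le\soc(A)^\ell$ acting transitively on $\Delta^\ell$, so each projection $\rho_i(H)\cong T^{k_i}$ with $k_i\ge 1$ is a transitive characteristically simple subgroup of $A$. If some $k_i\ge 2$, the almost simple analysis forces $\soc(A)=A_n$ in its natural action, whence $S\cong A_n^r$ and $|\Omega|=n^r$ -- case~(3)(c). Otherwise every $k_i=1$, the strips $T_j$ have pairwise disjoint supports covering $\interv\ell$, and each $T_j$ acts transitively on the product of the component sets indexed by its support; if every support is a singleton this is $k=r$ and case~(3)(a), while a support of size $s\ge 2$ forces (again via Lemma~\ref{prime}) $s=2$ and makes $T_j\cong T$ transitive on $\Delta^2$ through a diagonally embedded strip, which is equivalent to a factorisation of $T$ by two point stabilisers and produces Table~\ref{table2}, i.e. case~(3)(b). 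Finally, in type {\sc Ha} the socle $V=\soc(G)$ is elementary abelian of order $p^d$ and regular, so $H\cap V=1$, $H\cong T^k$ embeds in $G/V\le\gl(d,p)$, and it acts transitively on $\Omega=V$; a Goursat-style analysis of a core-free point stabiliser $H_0\le T^k$ of $p$-power index, combined with Guralnick's classification of subgroups of prime-power index in simple groups, the classification of transitive linear groups, and the vanishing of $H^1(T,-)$ on all but one relevant module, leaves only $T\cong\mathrm{SL}(3,2)$ acting on each of $k$ blocks of size $8$, whence $|\Omega|=8^k$ -- case~(0).

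The main obstacle is the diagonal analysis supporting cases~(2) and~(4): converting transitivity into the precise factorisation $Q_1=T_1(T_2\alpha^{-1})$, controlling the interaction of the strips' supports with the diagonal point stabiliser so as to pin down $r$ and $k$, and then extracting exactly the rows of Tables~\ref{factiso} and~\ref{table2} from the classification of factorisations -- together with the parallel but more delicate bookkeeping in type {\sc Pa} when a strip of support $2$ occurs.
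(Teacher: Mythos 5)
Your overall architecture matches the paper's: reduce to $H\le\soc(G)$ via Theorem~\ref{main}, then run through the O'Nan--Scott classes, converting transitivity into the factorization $S=HS_\alpha$ and quoting the classifications of full factorizations, of transitive simple subgroups of wreath products, and of \cite[Theorem~1.4]{badprgprimover}. Cases (1), (2), (3) and the exclusion of {\sc Tw} are essentially the paper's arguments (in (2) the reduction from $r\le 3$ to $r=2$ comes from strong multiple factorizations rather than Lemma~\ref{prime}, and you still need Scott's Lemma plus Lemma~\ref{uniform2} to exclude $H\pi_i=Q_i$, but these are the right tools). For case (0) you propose Guralnick's prime-power-index theorem plus transitive linear groups in place of the paper's maximal-block reduction and Baumeister's theorem \cite{barbaraha}; that route is workable but you must supply the irreducibility of the $H$-action (which is not automatic -- the paper manufactures it by passing to a maximal block system) and then eliminate the extra Guralnick candidates such as $A_{p^a}$ and the other linear groups of prime-power index, which is exactly what Baumeister's result packages for you.

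The genuine gap is in case (4). Knowing that each projection $H\bar\pi_i$ of $H$ onto a diagonal component $S_i=Q_{2i-1}\times Q_{2i}$ has the form $T\times T$ does \emph{not} let you ``reassemble'' $H$ as $H\pi_1\times\cdots\times H\pi_r$: the group $H$ is only subdirect in the product of its projections, and a minimal normal subgroup of $H$ could be a nontrivial strip linking a factor of $S_i$ to a factor of $S_j$ (for example $H=\{(t_1,t_2,t_2\phi,t_4)\}\cong T^3$ inside $S_1\times S_2$, whose two projections both look like $T\times T$). Ruling this out is the hardest part of the paper's proof of case (4): one first shows by an order count that no strip of $H$ can cover both factors of a single $S_i$, then, for a strip meeting two distinct components, computes $|(D_1\times D_2)\cap\overline H|=|T|^3/|Q|^2$ and derives a contradiction separately for each row of Table~\ref{factiso}. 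For $(Q,T)=(A_6,A_5)$ the order is not an integer and for $(\po_8^+(q),\Omega_7(q))$ a prime dividing $q^2+1$ does the job, but for $(M_{12},M_{11})$ the orders are consistent and the paper needs the dedicated Lemma~\ref{lemmam12}, whose proof uses the uniqueness of the conjugacy class of subgroups of order $660$ in $M_{11}$. None of this is present, or even flagged, in your outline, so as written the conclusion $k=r$ and $T_i<Q_i$ in case (4) is unjustified.
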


\begin{center}
\begin{table}
$$
\begin{array}{|c|c|c|}
\hline
& T & Q \\
\hline\hline
1& A_6 & A_6 \\
\hline
2 & M_{12} & M_{12} \\
\hline
3 & M_{12} & A_{12} \\
\hline
4 & \po_8^+(q) & \po_8^+(q) \\
\hline
5 & \po_8^+(q) & A_n\mbox{ where } n=|\po_8^+(q):\Omega_7(q)|\\
\hline
6 & \po_8^+(q) & \mbox{Sp}_8(2) \\\hline
7 & \mbox{Sp}_4(2^a),\ a\geq 2 & \mbox{Sp}_{4b}(2^{a/b}) \mbox{ where }\ b\mid a \\
\hline
8 & \mbox{Sp}_4(2^a),\ a\geq 2 & A_n \mbox{ where } n=|\mbox{Sp}_4(2^a):\mbox{Sp}_2(2^{2a})\cdot 2|\\
\hline
\end{array}
$$
\caption{Table for Theorem~\ref{theorem3}(3)(b)}\label{table2}
\end{table}
\end{center}

\begin{table}
  $$
\begin{array}{|c|c|c|}
\hline 
 & T & Q \\ \hline\hline
1& A_5 & A_6 \\ \hline
2& M_{11} & M_{12} \\ \hline
3& \Omega_7(q) & 
\po_8^+(q) \mbox{ with  }q\geq 2 \\ \hline
\end{array}
$$
\caption{Table for Theorem~\ref{theorem3}(2)-(4)}
\label{factiso}
\end{table}

In part~(1) of Theorem~\ref{theorem3}, we paraphrase
the aforementioned result~\cite[Theorem~1.4]{badprgprimover} by Baddeley and Praeger.

Note that we stated 
Theorem~\ref{theorem3} under the condition that $H$ contains no
minimal normal subgroup of $\soc(G)$,
which is the same condition as the one applied in~\cite{liebecktransitive}.
If we allow that $H$ can contain a minimal normal subgroup of $\soc(G)$, then the simple
components of $H$ and $\soc(G)$ must be isomorphic and $H$ is the direct
product of full strips of $\soc(G)$ (see Definition~\ref{stripdiag}). Such examples do arise and their
classification can be achieved by considering the factorization
$\soc(G)=H(\soc(G)_\alpha)$ where $\soc(G)_\alpha$ is a point stabilizer in
$\soc(G)$.
We also note that the group $G$ in Theorem~\ref{theorem3} is actually
primitive, except perhaps in part~(3). This shows that a finite
imprimitive quasiprimitive group $G$ very rarely contains a transitive
characteristically simple subgroup that does not contain a minimal normal
subgroup of $\soc(G)$.

In the proofs of Theorems~\ref{main} and~\ref{theorem3} we could
have relied more on the descriptions of transitive
subgroups of primitive permutations groups presented
in~\cite{liebecktransitive}. However,
since we are interested only in transitive characteristically simple 
subgroups, our arguments are simpler that those in~\cite{liebecktransitive},
and so we preferred a more direct approach.

Using Theorem~\ref{theorem3}, we obtain a characterization of
regular characteristically simple subgroups of quasiprimitive
permutation groups.

\begin{coro}\label{cor1}
  Suppose that $G$ and $H$ are as in Theorem~\ref{theorem3}.
If $H$ is regular then the O'Nan--Scott type of $G$ is either {\sc As} or
  {\sc Pa}.
\end{coro}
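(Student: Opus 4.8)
The plan is to run through the five cases (0)--(4) of Theorem~\ref{theorem3} and to show that, once $H$ is assumed regular, all of them are impossible except case~(1) (type~{\sc As}) and case~(3) (type~{\sc Pa}). Since $H$ is regular on $\Omega$ if and only if $|H|=|\Omega|$, and $|H|=|T|^k$, the entire argument is an order comparison; no structural input beyond Theorem~\ref{theorem3} is needed.

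First I would dispose of case~(0): there $|\Omega|=8^k$ while $|H|=|\mbox{SL}(3,2)|^k=168^k$, so $H$ is transitive but very far from regular, and this case is excluded. Likewise, case~(1) and case~(3) need no attention, since the types occurring there are exactly {\sc As} and {\sc Pa}, which the corollary permits.

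The two cases requiring a small computation are (2) and (4). In case~(2) the type is {\sc Sd} or {\sc HS} and $k=r=2$, so $S=Q_1\times Q_2$ acts on $\Omega$ in the diagonal fashion with two simple factors; a point stabiliser in $S$ is then a full diagonal subgroup of order $|Q|$, whence $|\Omega|=|S:S_\alpha|=|Q|$, and regularity of $H$ would force $|T|^2=|H|=|\Omega|=|Q|$. In case~(4) the type is {\sc Cd} or {\sc HC} and $k=r$; by the description in Theorem~\ref{theorem3}(4) a point stabiliser $S_\alpha$ is a direct product of pairwise disjoint full strips of support size $2$, and since $S_\alpha$ must project onto each factor $Q_i$ these supports partition $\interv r$. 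Hence $r=2m$, where $m$ is the number of strips, $|S_\alpha|=|Q|^m$ and $|\Omega|=|Q|^{2m}/|Q|^m=|Q|^m$, so regularity would again give $|T|^{2m}=|H|=|\Omega|=|Q|^m$, i.e.\ $|T|^2=|Q|$. Thus in both cases it only remains to verify that no row of Table~\ref{factiso} satisfies $|T|^2=|Q|$; in fact $|T|^2>|Q|$ in every row (for instance $|A_5|^2=3600>360=|A_6|$, and for the infinite family the quotient $|\Omega_7(q)|^2/|\po_8^+(q)|$ is easily seen from the standard order formulas to exceed $1$ for all $q\geq 2$). Therefore cases (2) and (4) cannot occur when $H$ is regular, and we are left with types {\sc As} and {\sc Pa}, which proves the corollary.

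I do not expect a genuine obstacle here; the corollary is a short consequence of Theorem~\ref{theorem3}. The only points that need care are the correct evaluation of the degree $|\Omega|$ of the diagonal-type actions in cases (2) and (4) — in particular the bookkeeping that the support-$2$ strips of $S_\alpha$ partition $\interv r$ — and the elementary verification that $|T|^2\neq|Q|$ for the three pairs listed in Table~\ref{factiso}.
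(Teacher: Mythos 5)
Your proposal is correct and follows essentially the same route as the paper: a case-by-case elimination of parts (0), (2) and (4) of Theorem~\ref{theorem3} by an order count. The paper phrases the exclusion of cases (2) and (4) by noting that the point stabiliser in $H$ is isomorphic to $T_1\cap T_2$, which is nontrivial for every row of Table~\ref{factiso} (and handles case~(4) as a blow-up of case~(2)); this is the same computation as your check that $|T|^2>|Q|$ in each row.
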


If we allow that $H$ may contain a minimal normal subgroup of $\soc(G)$, then we
obtain several examples of regular characteristically simple subgroups of
$G$ with $G$ having other O'Nan--Scott types. For example, if $G$ has
type {\sc Sd} and $H$ is a maximal normal subgroup of $\soc(G)$, then
$H$ is regular and characteristically simple.

Regular subgroups occur in the automorphisms groups of Cayley graphs.
Quasiprimitive subgroups of automorphisms of Cayley
graphs of finite simple groups were studied in~\cite{cayleyqp}.
Our results allow us to state the following corollary concerning
quasiprimitive subgroups of the automorphism groups of Cayley graphs of
characteristically simple groups.

\begin{coro}\label{cor3}
  Let $H$ be a finite nonabelian characteristically simple group and let
  $\Gamma$ be a noncomplete
  Cayley graph of $H$. Suppose that $G$ is a quasiprimitive
  subgroup of $\aut(\Gamma)$ with nonabelian socle such that $H\leq G$.
  Then either $H$ contains a minimal normal subgroup of $\soc(G)$ or the O'Nan--Scott type
  of $H$ is {\sc Pa}.
\end{coro}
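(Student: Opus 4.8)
The plan is to reduce this statement to Theorem~\ref{theorem3} (or rather to the dichotomy stated just after it) by carefully identifying the roles of $H$, $G$, and the graph $\Gamma$. Since $\Gamma$ is a Cayley graph of $H$, the group $H$ embeds in $\aut(\Gamma)$ as a regular subgroup via its action by right multiplication on itself; thus $H$ is a \emph{regular} subgroup of $G$. As $H$ is nonabelian characteristically simple, we may write $H\cong T^k$ for a nonabelian finite simple group $T$ and some $k\geq 1$. The group $G$ is quasiprimitive with nonabelian socle $\soc(G)$, so Theorem~\ref{main} applies: $H\leq\soc(G)$. If $k=1$, then $H=T$ is simple and the conclusion "the O'Nan--Scott type of $H$ is {\sc Pa}" is vacuously not what we want, so the first step is to handle $k=1$ separately — but here the relevant trichotomy is different because $H$ being simple is not the setting of Theorem~\ref{theorem3} ($k\geq 2$ there). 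So I would first argue that if $k=1$, then either $H$ contains a minimal normal subgroup of $\soc(G)$ (namely when some $Q_i\cong T$ and $H=Q_i$), or we are in one of the exceptional configurations; in fact when $k=1$ a transitive simple subgroup that is not itself a minimal normal subgroup of the socle is classified in~\cite{cartesiancsaba}, and in each of those cases one checks the O'Nan--Scott type is {\sc Pa} (or the socle is a minimal normal subgroup, contradiction). Alternatively, and more cleanly, I would simply absorb the $k=1$ case into the statement by noting that the corollary as phrased already allows "$H$ contains a minimal normal subgroup of $\soc(G)$", and when $k=1$ and $H$ is transitive but contains no minimal normal subgroup of the socle, the results of~\cite{cartesiancsaba} force type {\sc Pa}.

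For the main case $k\geq 2$, assume $H$ does not contain a minimal normal subgroup of $\soc(G)$; we must show $G$ has type {\sc Pa}. Now invoke Theorem~\ref{theorem3}: one of the cases (0)--(4) holds. Case (0) is impossible because $G$ would have type {\sc HA}, hence abelian socle, contradicting our hypothesis that $\soc(G)$ is nonabelian. In each remaining case we must use the extra information that $H$ is \emph{regular} on $\Omega$, i.e.\ $|H|=|\Omega|$ and $H$ is semiregular. The key point, which I expect to be the main obstacle, is ruling out cases (1), (2) and (4). In case (1), $G=A_n$ or $S_n$ acting naturally with socle $A_n$; here $H\cong T^k\leq A_n$ is transitive, and regularity would force $|T|^k=n$, but a transitive subgroup of $A_n$ that is a direct product of $k\geq 2$ copies of a nonabelian simple group cannot be regular in the natural action — this follows from Corollary~\ref{cor1}, which already records that regular $H$ forces type {\sc As} or {\sc Pa}, so I would cite Corollary~\ref{cor1} to immediately discard cases (1)--(3)$'$ that are not {\sc Pa}, and then argue within {\sc As} itself. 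Actually the cleanest route is: by Corollary~\ref{cor1}, since $H$ is regular, $G$ has type {\sc As} or {\sc Pa}; so it remains to exclude type {\sc As}. But type {\sc As} with $H\cong T^k$, $k\geq 2$, transitive and not containing a minimal normal subgroup of $\soc(G)$ is exactly case (1) of Theorem~\ref{theorem3}, where $G=A_n$ or $S_n$ naturally. Then $H\leq A_n$ is a transitive direct product of $k\geq 2$ nonabelian simple groups; a regular such $H$ would make $\Gamma$ a Cayley graph on $n$ points with $\aut(\Gamma)\geq A_n$, forcing $\Gamma$ complete or empty, contradicting the hypothesis that $\Gamma$ is noncomplete (and nonempty, being a Cayley graph with a connection set — or one argues directly that $A_n$ with $n\geq 10$ in its natural action cannot be a subgroup of $\aut(\Gamma)$ for a noncomplete, noncomplement-complete Cayley graph).

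Thus the remaining possibility is type {\sc Pa}, which is exactly the desired conclusion, completing the proof. The technical heart is the exclusion of type {\sc As}: one needs that the natural action of $A_n$ (or $S_n$) on $n\geq 10$ points is $2$-transitive, hence leaves invariant only the complete and empty graphs, so it cannot be contained in $\aut(\Gamma)$ for the noncomplete Cayley graph $\Gamma$; since $\Gamma$ is a Cayley graph it is regular and nonempty (it has a nonempty connection set unless $\Gamma$ is edgeless, which is a degenerate complement of the complete graph and can be excluded or handled trivially), the only $2$-transitively-invariant graphs being $K_n$ and its complement. I would phrase this carefully, perhaps noting that "noncomplete" should be read together with the implicit assumption that $\Gamma$ has edges, i.e.\ $1<|\Gamma|$ as a connection set, and then the argument is immediate.
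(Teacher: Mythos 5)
Your core argument is exactly the paper's: since $H$ acts regularly on the vertex set of its Cayley graph, Corollary~\ref{cor1} reduces the possibilities to the O'Nan--Scott types {\sc As} and {\sc Pa}, and type {\sc As} is excluded because there $G=A_n$ or $S_n$ in the natural ($2$-transitive) action, which would force $\Gamma$ to be complete. The only divergence is your digression on the case where $H$ is simple, which the paper does not treat (its proof, like yours, operates within the scope of Theorem~\ref{theorem3}, i.e.\ $k\geq 2$), and your proposed resolution of that case via~\cite{cartesiancsaba} is asserted rather than established.
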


Theorems~\ref{imbase} and~\ref{main}
are proved in 
Sections~\ref{sectionchar} and \ref{maintheorem}, respectively.
The proofs of Theorem~\ref{theorem3}, and Corollaries~\ref{cor1} and~\ref{cor3}
can be found in Section~\ref{sectioninclusions}.
Examples are described in Section~\ref{sect:examples} to
show that all possibilities described in Theorem~\ref{theorem3} arise.

\section{Subgroups of characteristically simple groups and factorizations}
\label{sect:2}

We say that a group $G$ is \textbf{characteristically simple} if it has no proper and nontrivial characteristic subgroups; that is, its only subgroups invariant under $\aut(G)$ are $1$ and $G$ itself. In particular, when $G$ is finite, this is equivalent to saying that $G$ is a direct product of isomorphic simple groups \cite[3.3.15]{robinson}.

If $A$ and $B$ are proper subgroups of a group $G$ such that $G=AB$, then we call this expression a
\textbf{factorization}
of $G$.
In a transitive permutation group $G$, a subgroup $H\leq G$ is transitive
if and only if $G=HG_\alpha$ where $G_\alpha$ is
the stabilizer of a point. Hence factorizations play a natural role
in the study of transitive subgroups of permutation groups and in this section
we collect some auxiliary results concerning subgroups and factorizations
of finite characteristically simple groups.

\begin{lemma}\label{qabt}
Let $Q$ be a finite simple group, and suppose that $Q=AB$ is a factorization of $Q$ 
 where $A\cong T^{s_1}$ and $B\cong T^{s_2}$ for some finite nonabelian simple group $T$ and positive integers $s_1$ and $s_2$. 
 Then $s_1=s_2=1$ and $Q$ and $T$ are as in Table \ref{factiso}.
\end{lemma}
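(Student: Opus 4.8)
The plan is to reduce the statement to the classification of the maximal factorizations of the finite simple groups due to Liebeck, Praeger and Saxl, and then to run through that list. First, $Q$ is a nonabelian finite simple group, since it contains the nonabelian subgroup $A$. As $A$ and $B$ are proper subgroups of $Q$, they lie in maximal subgroups $M_A$ and $M_B$ respectively, and from $Q=AB\le M_AB\le M_AM_B\le Q$ we obtain a maximal factorization $Q=M_AM_B$ with $A\le M_A$ and $B\le M_B$. Since $A\cong T^{s_1}$ and $B\cong T^{s_2}$ with $T$ a finite nonabelian simple group, both $A$ and $B$ are insoluble, so neither $M_A$ nor $M_B$ may be soluble; this already removes every maximal factorization with a soluble factor. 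In addition, $M_A$ has a subgroup isomorphic to $T^{s_1}$ and $M_B$ has a subgroup isomorphic to $T^{s_2}$, whence $|T|^{s_1}\le|M_A|$ and $|T|^{s_2}\le|M_B|$, while counting orders in $Q=AB$ gives $|T|^{s_1+s_2}=|A|\,|B|=|Q|\,|A\cap B|\ge|Q|$. These conditions, together with the requirement that $A$ and $B$ be direct powers of \emph{one and the same} simple group $T$, are very restrictive.

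I would then examine the maximal factorizations $Q=M_AM_B$ according to the isomorphism type of $Q$. For $Q$ sporadic there are finitely many maximal factorizations, and a direct check leaves only $M_{12}=M_{11}M_{11}$, so that $s_1=s_2=1$ and $(Q,T)=(M_{12},M_{11})$ as in row~2 of Table~\ref{factiso}. For $Q=A_n$, in any maximal factorization at least one factor is intransitive of the form $(S_k\times S_{n-k})\cap A_n$, a point stabilizer $A_{n-1}$, or a primitive $k$-homogeneous subgroup from a short list; combining this with the order bound $|M_A|\,|M_B|\ge|Q|$ excludes all large $n$, and a finite check of the small cases yields only $A_6=A_5A_5$, that is, $s_1=s_2=1$ and $(Q,T)=(A_6,A_5)$, which is row~1. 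For $Q$ of exceptional Lie type, no maximal factorization has both factors insoluble and large enough to meet the order bound while containing powers of a common nonabelian simple group. Finally, for $Q$ classical one goes through the Liebeck--Praeger--Saxl tables: in almost every case a factor is a parabolic, or an $N_i$-type, imprimitive, tensor-decomposition or field-extension subgroup whose structure and order are incompatible with containing the required power of a nonabelian simple group subject to $|T|^{s_1+s_2}\ge|Q|$; the only surviving family is $Q=\po_8^+(q)$, where triality yields $\po_8^+(q)=\Omega_7(q)\,\Omega_7(q)$ (using two of the three classes of $\Omega_7(q)$), forcing $s_1=s_2=1$ and $(Q,T)=(\po_8^+(q),\Omega_7(q))$ for all $q\ge2$ (with $\Omega_7(2^a)\cong\spp_6(2^a)$), which is row~3.

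The main obstacle is the classical-group case, and within it the fact that we only know $A\le M_A$ rather than $A=M_A$: \emph{a priori} one factor could be a small subgroup while the other is a large imprimitive, tensor-type or product-type maximal subgroup that genuinely contains several simple direct factors. Excluding such possibilities requires combining the exact order formulas and subgroup structures from the Liebeck--Praeger--Saxl tables with the constraint that both $A$ and $B$ are powers of the same nonabelian simple group, rather than relying on crude order estimates alone; performing this bookkeeping uniformly over the families of classical groups is where the bulk of the argument lies.
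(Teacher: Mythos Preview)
Your approach via the Liebeck--Praeger--Saxl classification of maximal factorizations is viable in principle, but it takes a much harder route than the paper and, as you yourself acknowledge, leaves the classical case as a substantial piece of unfinished bookkeeping. The paper's argument rests on a single observation you nearly made but did not exploit: since $|Q|=|A|\,|B|/|A\cap B|$ and both $|A|$ and $|B|$ are powers of $|T|$, every prime dividing $|Q|$ already divides $|T|$, and hence divides both $|A|$ and $|B|$. Thus $p(Q)=p(A)=p(B)$, so $Q=AB$ is a \emph{full factorization} in the sense of Baddeley--Praeger~\cite{factorizations}. Full factorizations of finite simple groups are classified in~\cite[Theorem~1.1 and Table~I]{factorizations}, and that table is short enough that direct inspection immediately yields $s_1=s_2=1$ together with the three rows of Table~\ref{factiso}.

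You actually had the key divisibility $|Q|\mid |T|^{s_1+s_2}$ implicit in your order count (it is exactly the statement that $|A\cap B|=|A|\,|B|/|Q|$ is an integer), but you only extracted the weaker inequality $|T|^{s_1+s_2}\ge |Q|$. That inequality is not sharp enough to drive the alternating and classical cases through cleanly; for instance, in $A_n=A_{n-1}B$ with $B$ transitive it gives no uniform bound on $n$. Recognising the prime-set equality converts a long case analysis against the maximal-factorization tables into a lookup in the much shorter full-factorization table.
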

\begin{proof}
  For a group $G$, let $p(G)$ denote the set of primes that divide
  $|G|$. We claim that $p(Q)=p(A)=p(B)$.  Clearly, $p(A)\subseteq p(Q)$ and
  $p(B)\subseteq p(Q)$. On the other hand,
  given a prime number $p\in p(Q)$, since 
  $|Q|=|A||B|/|A\cap B|$ and $|A|$ and $|B|$ are powers of $|T|$,
  the prime $p$ divides $|T|$, and so $p$ divides
  $|A|$ and $|B|$. 
  Thus $p(Q)=p(A)=p(B)$, as claimed. In the terminology
  of~\cite{factorizations}, the expression $Q=AB$ is a full
  factorization of the simple group $Q$ and such factorizations
  are described in~\cite[Theorem~1.1]{factorizations}.
  Keeping in mind that  $\mbox{Sp}_6(2)\cong\Omega_7(2)$, inspection
 of~\cite[Table I]{factorizations} shows that
  the only options
  where $|A|$ and $|B|$ are powers of the same finite simple group occur
  when $A\cong B\cong T$ and $Q$ and $T$ are as in Table \ref{factiso}.
\end{proof}

Next we introduce some terminology to describe diagonal subgroups
in wreath products.

\begin{defi}\label{stripdiag}
  Let $Q_1,\ldots,Q_r$ be groups and set
  $S=Q_1\times\cdots\times Q_r$. Consider, for $i\in\interv r$,
  the projections 
  $$\begin{array}{ccccc}
    \pi_i&\colon&S&\rightarrow&Q_i\\
    & &(q_1,\ldots,q_r)&\mapsto&q_i,
   \end{array}$$ 
and assume that $P\leq S$. 
\begin{enumerate}
 \item[(i)] $P$ is a \textbf{strip}
 \index{strip}%
 of $S$ if $P\neq 1$ and, for each $i\in\ir$, either the restriction of $\pi_i$ to $P$ is injective  or $P\pi_i=1$.
  We define the \textbf{support}
 of $P$ as $$\supp(P)=\{Q_i\mid P\pi_i\neq 1\}.$$ 
 \item[(ii)] A strip $P$ of $S$ is said to be \textbf{nontrivial}
 \index{strip!nontrivial}%
 if $|\supp(P)|>1$.
 \item[(iii)] A strip $P$ of $S$ is said to be a \textbf{full strip}
 \index{strip!full}%
 if $P\pi_i=Q_i$ for all $Q_i\in\supp(P)$.
 \item[(iv)] Two strips $P$ and $Q$ are
 said to be \textbf{disjoint} if $\supp(P)\cap\supp(Q)=\varnothing$.
 \item[(v)] $P$ is a \textbf{subdirect subgroup}
 \index{subgroup!subdirect}%
 of $S$ if $P\pi_i=Q_i$ for each $i\in\ir$.  
 \item[(vi)] $P$ is a \textbf{diagonal subgroup}
 \index{subgroup!diagonal}%
 of $S$ if the restriction of $\pi_i$ to $P$ is injective
  for each $i\in\ir$.
 \item[(vii)] $P$ is a \textbf{full diagonal subgroup}
 \index{subgroup!full diagonal}%
 of $S$ if $P$ is both a subdirect and a diagonal subgroup of $S$. 
\end{enumerate}
\end{defi}

Note that the definitions above depend on the given
direct decomposition of the group $S$. However, we will usually assume that
$S$ is the direct product of pairwise isomorphic nonabelian simple groups
and, unless explicitly stated otherwise,
the conditions in Definition~\ref{stripdiag} will be 
interpreted with respect to the unique finest direct decomposition of $S$.
The concepts introduced in Definition~\ref{stripdiag} are studied in more
depth in~\cite[Section~4.4]{bookcsaba}.
If $P$ is a strip of $S$, as above, and, for some $i\in\interv r$, the restriction $\pi_i|_P$ is injective, then  $P\pi_i\cong P$.


The first part of the following lemma appeared in Scott's
paper~\cite[Lemma~p.~328]{scott}, and is known as
Scott's Lemma (see also~\cite[Section~4.6]{bookcsaba} for several generalizations).
It describes the structure of the subdirect subgroups of a direct product of nonabelian simple groups. The second part 
can be found, for example, in \cite[Proposition 5.2.5(i)]{structureclassical}.
Note that Lemma~\ref{scottlemma} does not assume that the groups should be finite.

\begin{lemma}\label{scottlemma}
 Consider $S=Q_1\times\cdots\times Q_r$, where each $Q_i$ is a nonabelian simple group, and let 
 $P$ be a nontrivial subgroup of $S$. 
 \begin{enumerate}
 \item If $P$ is a subdirect subgroup of $S$, then $P$ is the direct product
   of full strips whose supports form a partition of $\{Q_1,\ldots,Q_r\}$.
  \item If $P$ is a normal subgroup of $S$, then $P=\prod_{j\in J} Q_j$, where $J\subseteq \ir$. 
 \end{enumerate}
\end{lemma}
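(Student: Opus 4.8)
The plan is to prove part~(2) first, by induction on $r$, and then deduce part~(1) from it, again by induction on $r$; no finiteness hypothesis will be needed.

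For part~(2), let $P$ be a nontrivial normal subgroup of $S=Q_1\times\cdots\times Q_r$. The case $r=1$ is immediate from the simplicity of $Q_1$. For $r\geq 2$, note that $P\cap Q_r$ is normal in $S$, hence in $Q_r$, so it equals $1$ or $Q_r$. If $P\cap Q_r=Q_r$, then $Q_r\leq P$ and a direct check gives $P=P^{*}\times Q_r$ with $P^{*}=P\cap(Q_1\times\cdots\times Q_{r-1})$; since $P^{*}\normal Q_1\times\cdots\times Q_{r-1}$, the inductive hypothesis applies. If $P\cap Q_r=1$, then $[P,Q_r]\leq P\cap Q_r=1$ (here we use $P\normal S$), so $P$ centralizes $Q_r$ and therefore $P\leq\cent{S}{Q_r}=Q_1\times\cdots\times Q_{r-1}$ because $Z(Q_r)=1$; again $P$ is a nontrivial normal subgroup of a product of $r-1$ factors, and we conclude by induction.

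For part~(1), let $P$ be a nontrivial subdirect subgroup of $S$, so $P\pi_i=Q_i$ for every $i$. The case $r=1$ is trivial. For $r\geq 2$, put $N=P\cap Q_r$. One checks that $N\normal P$ (since $Q_r\normal S$) and that $N$ is also normalized by $Q_r=P\pi_r$: each element of $Q_r$ lifts to some $p\in P$ whose remaining coordinates centralize $Q_r$, so it acts on $N\leq Q_r$ exactly as $p$ does. Hence $N\normal Q_r$ and $N\in\{1,Q_r\}$. If $N=Q_r$, then exactly as in part~(2) we obtain $P=P^{*}\times Q_r$ with $P^{*}=P\cap(Q_1\times\cdots\times Q_{r-1})$ subdirect in $Q_1\times\cdots\times Q_{r-1}$, and induction together with the trivial strip $Q_r$ yields the decomposition. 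If $N=1$, then the projection $\rho$ of $S$ onto $Q_1\times\cdots\times Q_{r-1}$ restricts to an isomorphism $P\to\overline{P}:=\rho(P)$, with $\overline{P}$ subdirect in $Q_1\times\cdots\times Q_{r-1}$; by induction $\overline{P}=D_1\times\cdots\times D_m$ is a direct product of full strips whose supports partition $\{Q_1,\ldots,Q_{r-1}\}$. Since $\rho|_P$ is an isomorphism and $P\pi_r=Q_r$, the group $P$, viewed inside $(Q_1\times\cdots\times Q_{r-1})\times Q_r$, is the graph $\{(\overline{p},\overline{p}\varphi)\mid\overline{p}\in\overline{P}\}$ of a surjective homomorphism $\varphi\colon\overline{P}\to Q_r$. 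Now $\ker\varphi\normal\overline{P}$ and $\overline{P}/\ker\varphi\cong Q_r$ is simple, so part~(2) applied to $\overline{P}$ gives $\ker\varphi=\prod_{i\in I}D_i$ for some $I$, and simplicity of the quotient forces $|I|=m-1$; say $\ker\varphi=D_1\times\cdots\times D_{m-1}$. Then $P=D_1\times\cdots\times D_{m-1}\times D_m'$, where $D_m'=\{(d,d\varphi)\mid d\in D_m\}$ is a full strip with support $\supp(D_m)\cup\{Q_r\}$, and the supports of $D_1,\ldots,D_{m-1},D_m'$ partition $\{Q_1,\ldots,Q_r\}$, as required.

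The manipulations with projections and the identifications $P=P^{*}\times Q_r$ are routine, and part~(2) is elementary once one invokes the centralizer $\cent{S}{Q_r}$. The step needing the most care is the case $N=1$ of part~(1): recognizing $P$ as the graph of a homomorphism out of $\overline{P}$ and locating its kernel among the $D_i$ by means of part~(2). This is where the strip structure genuinely enters, and it is the reason for establishing part~(2) first.
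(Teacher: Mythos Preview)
Your proof is correct. The induction for part~(2) is standard, and the argument for part~(1) is well organised: the key insight is that when $N=P\cap Q_r=1$, the subgroup $P$ becomes the graph of a surjection $\varphi\colon\overline P\to Q_r$, after which part~(2), applied to the internal direct product $\overline P=D_1\times\cdots\times D_m$, pins down $\ker\varphi$ as the product of all but one of the $D_i$. One small point worth making explicit is that in the case $N=Q_r$ of part~(1) the complement $P^{*}$ is nontrivial (otherwise $P=Q_r$ would fail to be subdirect), so the inductive hypothesis applies; you implicitly use this.

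As for comparison with the paper: the paper does not actually prove this lemma. It merely records that part~(1) is Scott's Lemma \cite[Lemma, p.~328]{scott} (see also \cite[Section~4.6]{bookcsaba}) and that part~(2) is \cite[Proposition~5.2.5(i)]{structureclassical}. Your argument is therefore a self-contained replacement for those citations. Its virtue is that it keeps the paper independent of outside sources for a fact it uses repeatedly; the cost is a page of routine verification that the cited references absorb.
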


The next result characterizes the factorizations $S=HD$ of finite nonabelian
characteristically 
simple groups $S$, in which $H$ is a nonabelian characteristically simple subgroup and $D$
is a full diagonal subgroup of $S$. 

\begin{lemma}\label{hab}
  Let $Q$ and $T$ be nonabelian finite simple groups,
  let $S=Q_1\times\cdots\times Q_r$ where $r\geq 2$ and
  $Q_i\cong Q$ for all $i$, 
and let $H\cong T^k$ be a nonabelian characteristically simple subgroup of $S$.
Consider, for $i\in\ir$, the projections $\pi_i\colon S\rightarrow Q_i$  and suppose that 
$1<H\pi_i<Q_i$ for all $i\in\ir$.
Assume, for $i=2,\ldots,r$, that $\alpha_i:Q_1\rightarrow Q_i$
is an isomorphism and define
$$
D=\{(q,q\alpha_2,\ldots,q\alpha_r)\mid q\in Q_1\}.
$$
If $DH=S$, then $r=k=2$ and $H=T_1\times T_2$ where
$T_1=H\cap Q_1$, $T_2= H\cap Q_2$ and $T_1\cong T_2\cong T$.
Further, in this case, 
$Q_1=T_1(T_2\alpha_2^{-1})$ and
the groups  $Q$ and $T$ are as in one of the rows of Table~\ref{factiso}.
 \end{lemma}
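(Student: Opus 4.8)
The plan is to exploit the factorization $S = DH$ together with Lemma~\ref{qabt} and Scott's Lemma. First I would record the orders: since $|S| = |D||H|/|D\cap H|$ and $|D| = |Q|$, $|S| = |Q|^r$, we get $|Q|^{r-1}|D\cap H| = |H| = |T|^k$. This shows $r-1 \le k$ and puts an arithmetic constraint that will eventually force $r = 2$.

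The next and main step is to analyse the images under the projections $\pi_i$. Since $D$ is a full diagonal subgroup, $D\pi_i = Q_i$ for every $i$; applying $\pi_i$ to $DH = S$ gives nothing new, so instead I would look at the structure of $H$ directly. Because $H \cong T^k$ is characteristically simple, its simple direct factors are permuted by any automorphism, and in particular $H$ is a subdirect subgroup of $H\pi_1 \times \cdots \times H\pi_r$. By Scott's Lemma (Lemma~\ref{scottlemma}(1)) applied inside $\prod_i H\pi_i$ — more precisely, by the strip decomposition of subgroups of products of simple groups — $H$ is a direct product of strips of $S$, each strip being a subdirect product of some subset of the $H\pi_i$. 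I would then argue that each such strip, being a section of $T^k$, is itself isomorphic to a power of $T$ if it is a full diagonal over its support inside the $H\pi_i$'s, but since $1 < H\pi_i < Q_i$ the groups $H\pi_i$ are proper subgroups of $Q_i$ isomorphic to powers of $T$. The key leverage comes from intersecting with $D$: for the factorization $DH = S$ to hold, $H$ must "cover" $S/D$, and a counting/covering argument (comparing $|D\cap H|$ with the product of the $|H\pi_i|$) forces the strips of $H$ to have support of size one, i.e. $H = T_1 \times \cdots \times T_r$ with $T_i = H\cap Q_i \le H\pi_i$ and each $T_i \cong T$ (so $k = r$) — or else the order arithmetic fails.

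Having reduced to $H = \prod_i T_i$ with $T_i \le Q_i$ proper and $T_i \cong T$, I would project the factorization $DH = S$ to the first coordinate along the "diagonal collapse". Concretely, write an arbitrary element of $Q_1$ as a product of a $D$-part and an $H$-part: $(q, q\alpha_2, \ldots, q\alpha_r) \cdot (t_1, \ldots, t_r)$ lies in $S$, and demanding that this hit an arbitrary element of $Q_1 \times 1 \times \cdots \times 1$ forces, in coordinate $i \ge 2$, $q\alpha_i \cdot t_i = 1$, i.e. $t_i = (q\alpha_i)^{-1} \in T_i$, so $q \in T_i\alpha_i^{-1}$ for all $i\ge 2$; meanwhile coordinate $1$ gives that every element of $Q_1$ is of the form $q t_1$ with $q \in \bigcap_{i\ge 2} T_i\alpha_i^{-1}$ and $t_1 \in T_1$. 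Thus $Q_1 = \bigl(\bigcap_{i=2}^r T_i\alpha_i^{-1}\bigr) T_1$, a factorization of the simple group $Q_1 \cong Q$ with both factors isomorphic to powers of $T$ — the left factor is an intersection of subgroups each isomorphic to $T$, hence (being itself nontrivial, as the product must be all of $Q_1$) isomorphic to $T^{s}$ for some $s\ge 1$, and $T_1 \cong T$. Lemma~\ref{qabt} now applies and yields both that the exponents are $1$ (so $\bigcap_{i\ge 2} T_i\alpha_i^{-1} \cong T$, which combined with the earlier order count forces $r = 2$, since $r > 2$ would make $|Q|^{r-1} > |T|^{r} \ge |H|$ impossible once $|D\cap H|$ is bounded) and that $Q, T$ appear in Table~\ref{factiso}. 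For $r = 2$ the displayed relation becomes exactly $Q_1 = T_1(T_2\alpha_2^{-1})$, as required.

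I expect the main obstacle to be the reduction to $r = k = 2$ and the verification that $H$ splits as $\prod_i T_i$ rather than containing a nontrivial strip: one must rule out the possibility that $H$ has a strip $P$ with $|\supp(P)| \ge 2$ whose projections are proper subgroups of the $Q_i$. The cleanest route is probably the order inequality $|Q|^{r-1} \mid |T|^k$ combined with the fact that each $H\pi_i < Q_i$ is a proper subgroup isomorphic to a power of $T$ and hence has index in $Q_i$ at least the smallest index of such a subgroup; pushing this against $DH = S$ (which forces $H$ to surject onto $S/D \cong Q$) should give $k \ge r$ and $|Q| \le |T|^{k/r}\cdot(\text{bounded})$, squeezing $r$ down to $2$. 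The rest is bookkeeping with Lemma~\ref{qabt} and Table~\ref{factiso}.
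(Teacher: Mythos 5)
There are two genuine gaps in your outline. First, the reduction to $H=T_1\times\cdots\times T_r$ with $T_i=H\cap Q_i$ rests on an unspecified ``counting/covering argument''; nothing you write actually rules out a nontrivial strip in $H$ whose projections are proper subgroups of the $Q_i$, and this is precisely the delicate point. (In the paper this issue is handled only \emph{after} $r=2$ and $H\pi_i\cong T$ are established, where it becomes easy: $H\cap Q_i\unlhd H\pi_i\cong T$ is trivial or all of $H\pi_i$, and if both intersections were trivial then $H\cong T$ and $|D||H|=|Q||T|<|Q|^2$ would contradict $DH=S$.) Second, your route to $r=2$ does not work as stated. The subgroup $\bigcap_{i\geq 2}T_i\alpha_i^{-1}$ is an intersection of copies of $T$ inside $Q_1$, and there is no reason for it to be isomorphic to a power of $T$, so Lemma~\ref{qabt} does not apply to the factorization $Q_1=\bigl(\bigcap_{i\geq 2}T_i\alpha_i^{-1}\bigr)T_1$. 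Moreover, the order squeeze you propose fails numerically: for $Q=A_6$, $T=A_5$ and $r=3$ one has $|Q|^{r-1}=129600<216000=|T|^{r}$, so comparing $|Q|^{r-1}$ with $|T|^k$ does not exclude $r=3$.

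The paper's proof proceeds in the opposite order and uses classification input where you use counting. It first invokes \cite[Lemma~8.16]{bookcsaba} to get $r\leq 3$ directly from $DH=S$ with all $H\pi_i$ proper; for $r=3$ that lemma shows $\{H\pi_1,H\pi_2\alpha_2^{-1},H\pi_3\alpha_3^{-1}\}$ is a \emph{strong multiple factorization} of $Q_1$, and since each $H\pi_i\cong T^{s_i}$, the classification in \cite[Table~V]{factorizations} eliminates $r=3$. Only then does it apply Lemma~\ref{qabt}, and to the factorization $Q_1=(H\pi_1)(H\pi_2\alpha_2^{-1})$ by the full projections (which \emph{are} powers of $T$ automatically, unlike your intersection), before finally deducing the splitting of $H$. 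To salvage your outline, replace the intersection factorization by the projection factorization and supply the strong-multiple-factorization step (or an equivalent) to dispose of $r=3$.
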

\begin{proof}
  By \cite[Lemma 8.16]{bookcsaba},
  $r\leq 3$. Further, if $r=3$, then the same lemma implies that
  $$
  Q_1=H\pi_1(H\pi_2\alpha_2^{-1}\cap H\pi_3\alpha_3^{-1})
  =H\pi_2\alpha_2^{-1}(H\pi_1\cap H\pi_3\alpha_3^{-1})=
  H\pi_3\alpha_3^{-1}(H\pi_1\cap H\pi_2\alpha_2^{-1}).
  $$
  Hence, in the terminology of~\cite{factorizations},
  the set $\{H\pi_1,H\pi_2\alpha_2^{-1},H\pi_3\alpha_3^{-1}\}$ is a
  strong multiple factorization of $Q_1$. 
 Noting that $H\pi_i\cong T^{s_i}$ with $s_i\geq 1$,
 and considering the classification of strong multiple factorizations
 of finite simple groups
 in \cite[Table V]{factorizations}, we obtain that $r=2$.
 Furthermore, $Q_1=(H\pi_1)(H\pi_2\alpha_2^{-1})$ where
 $H\pi_1\cong T^{s_1}$ and $H\pi_2\alpha_2^{-1}\cong H\pi_2\cong T^{s_2}$ with some
 $s_1,s_2\geq 1$.
 Now Lemma~\ref{qabt} implies that $H\pi_1\cong H\pi_2\cong T$
 and that the groups $Q$ and $T$ are as in Table~\ref{factiso}.
 Finally, note that $H\cap Q_i\unlhd H\pi_i$ holds for
 $i=1,2$. Hence either $H\cap Q_i=H\pi_i$ or
 $H\cap Q_i=1$. Thus, if $H\neq (H\cap Q_1)\times
 (H\cap Q_2)$, then $H\cong H\pi_1\cong
 H\pi_2\cong T$, and hence the factorization $S=DH$ is impossible,
 since
 $$
 |D||H|=|Q||T|<|Q|^2=|S|.
 $$
 Thus $H= (H\cap Q_1)\times
 (H\cap Q_2)$ must hold. Setting $T_1=H\cap Q_1$ and $T_2=H\cap Q_2$, the
 rest of the lemma follows.
 \end{proof}

The following result was originally proved in~\cite[Lemma~2.2]{badprgprimover};
see also~\cite[Theorem~1.2]{uniform} and~\cite[Section~4.8]{bookcsaba}
for generalizations.

\begin{lemma}\label{uniform2}
  Let $T$ be a nonabelian finite simple group and let $X$ and $Y$ be
  direct products of pairwise disjoint  nontrivial full strips in $T^r$
  with $r\geq 2$. Then $XY\neq T^r$.
\end{lemma}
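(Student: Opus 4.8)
The plan is to reduce the statement to a counting argument about the supports of the strips comprising $X$ and $Y$, using the observation that a nontrivial full strip of $T^r$ is much ``smaller'' than the product of the simple factors it involves, so its index in that product grows with its support size. Concretely, write $X = X_1\times\cdots\times X_a$ and $Y = Y_1\times\cdots\times Y_b$ as products of pairwise disjoint nontrivial full strips, and let $\sigma_i = |\supp(X_i)|$ and $\tau_j = |\supp(Y_j)|$, so that $\sigma_i,\tau_j\geq 2$ for all $i,j$. Since the $X_i$ are pairwise disjoint strips, $|X| = |T|^{\sum_i 1} = |T|^a$ (each full strip is isomorphic to $T$), while the product of the simple factors in $\supp(X) := \bigcup_i\supp(X_i)$ has order $|T|^{\sum_i\sigma_i}$; similarly $|Y| = |T|^b$. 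If $X$ and $Y$ were to satisfy $XY = T^r$, then first we must have $\supp(X)\cup\supp(Y) = \{$all $r$ factors$\}$, since any factor $T_\ell$ not met by either $X$ or $Y$ would force $\pi_\ell(XY)=1\neq T$; hence $\sum_i\sigma_i + \sum_j\tau_j \geq r$ is not quite what we want — rather the union has size $r$, so by inclusion a crude bound gives $\sum_i\sigma_i + \sum_j\tau_j\geq r$, but we need the reverse-flavored inequality on the orders.

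The key computation is the order inequality coming from $|XY| = |X||Y|/|X\cap Y| \leq |X||Y|$, i.e. $|T|^r = |T^r| \leq |T|^{a+b}$, which yields $a + b \geq r$. On the other hand, since each $\sigma_i\geq 2$ and the supports $\supp(X_i)$ are pairwise disjoint subsets of an $r$-element set, $\sum_i\sigma_i \leq r$ forces $2a\leq\sum_i\sigma_i\leq r$, so $a\leq r/2$; likewise $b\leq r/2$, giving $a+b\leq r$. Thus we are forced into the boundary case $a = b = r/2$, every $\sigma_i = \tau_j = 2$, the supports $\{\supp(X_i)\}$ form a perfect matching on the $r$ factors, the supports $\{\supp(Y_j)\}$ form another perfect matching, and moreover $X\cap Y = 1$ (so that $|XY| = |T|^r$ exactly). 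The remaining work — and this is where I expect the real obstacle to be — is to rule out this extremal configuration: one must show that two perfect matchings' worth of full ``diagonal'' strips can never multiply up to the whole group, even when their orders are exactly right.

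To finish off the boundary case I would argue as follows. Consider the graph on vertex set $\{T_1,\dots,T_r\}$ whose edges are the supports of the $X_i$ together with the supports of the $Y_j$; this is a union of two perfect matchings, hence a disjoint union of even cycles (and possibly doubled edges, i.e. $2$-cycles, when some $\supp(X_i) = \supp(Y_j)$). If some connected component is a genuine cycle $T_{\ell_1} - T_{\ell_2} - \cdots - T_{\ell_{2m}} - T_{\ell_1}$ of length $\geq 4$, restrict the putative equation $XY = T^r$ to the subproduct over that component: the restrictions of the $X_i$'s and $Y_j$'s involved are full strips supported on the edges of a single cycle, and a standard argument with Scott's Lemma (Lemma~\ref{scottlemma}) shows the subgroup they generate is a proper subdirect subgroup (the ``product of strips around a cycle'' never fills the whole product — one loses a factor of $|T|$ per cycle), contradicting fullness of the projection. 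If instead every component is a doubled edge, then for each $i$ there is a $j$ with $\supp(X_i) = \supp(Y_j) = \{T_{p},T_{q}\}$ say, and the equation restricts to $X_i Y_j = T_p\times T_q$ with $X_i, Y_j$ both full diagonal subgroups of $T_p\times T_q$; but then $|X_i||Y_j| = |T|^2 = |T_p\times T_q|$ forces $X_i\cap Y_j = 1$, which is impossible for two full diagonal subgroups of $T\times T$ when $T$ is nonabelian (any two such intersect in the diagonal determined by composing the defining automorphisms, which is nontrivial). Either way we reach a contradiction, so $XY\neq T^r$.
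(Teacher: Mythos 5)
The paper does not prove this lemma at all: it is quoted from \cite[Lemma~2.2]{badprgprimover}, with \cite[Theorem~1.2]{uniform} and \cite[Section~4.8]{bookcsaba} cited for generalizations. Your opening reduction is correct and matches the standard first step: since each disjoint nontrivial full strip contributes a factor isomorphic to $T$ and has support of size at least $2$, one gets $a+b\geq r$ from $|T|^r\leq|X||Y|$ and $a+b\leq r$ from the disjointness of supports, forcing two perfect matchings of support-$2$ strips with $X\cap Y=1$, and the cycle/doubled-edge decomposition of the union of two matchings is also fine.

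The gap is in how you dispose of the extremal configuration, which is where the entire content of the lemma lives. In the doubled-edge case, two full diagonal subgroups $D_\alpha=\{(t,t\alpha)\}$ and $D_\beta=\{(t,t\beta)\}$ of $T\times T$ do \emph{not} ``intersect in the diagonal determined by composing the defining automorphisms''; they intersect in $\{(t,t\alpha)\mid t\in\mathrm{Fix}(\alpha\beta^{-1})\}$, and the assertion that this is nontrivial is exactly the statement that no automorphism of a finite nonabelian simple group is fixed-point-free. That is a genuine theorem: it is immediate for inner automorphisms (conjugation by $g\neq 1$ fixes $g$), but $\alpha\beta^{-1}$ need not be inner, and the general case rests on the classification of finite simple groups. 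It cannot be waved through with ``$T$ is nonabelian.'' In the cycle case your argument is actually false as stated: Scott's Lemma says nothing here, and the subgroup \emph{generated} by the restricted strips is typically all of the component (already two distinct full diagonals of $T^2$ generate $T^2$). What you need is that the \emph{product set} $\pi(X)\pi(Y)$ is proper; chasing an element of $\pi(X)\cap\pi(Y)$ around the cycle shows this intersection is again the fixed-point subgroup of the composite of all the defining isomorphisms around the cycle, so the same fixed-point theorem is what is required, not a ``loss of a factor of $|T|$ per cycle.'' To repair the proof, replace both endgame arguments by the single statement that every automorphism of a finite nonabelian simple group has a nontrivial fixed point, with an explicit citation (e.g.\ to \cite{factorizations} or \cite{uniform}), or simply cite \cite[Lemma~2.2]{badprgprimover} as the paper does.
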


The following technical lemma, which characterizes
certain factorizations related to the Mathieu group $M_{12}$,
will be used in the last section.

\begin{lemma}\label{lemmam12}
 Let $S=Q^4$ and $A_1,A_2,A_3,A_4\leq Q$ such that $Q=M_{12}$ and each $A_i\cong M_{11}$.
 Let $X=\{(p,p,q,q)\mid p,q\in Q\}$ and $Y=\{(a_1,a_2,a_2\psi,a_4)\mid a_i\in A_i\}$ be subgroups 
 of $S$, where $\psi\colon A_2\rightarrow A_3$ is an isomorphism. Then $S\neq XY$.
\end{lemma}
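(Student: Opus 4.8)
The plan is to derive a contradiction by comparing the orders of the relevant subgroups, exactly as in the final step of Lemma~\ref{hab}. First I would record the sizes involved: $|Q|=|M_{12}|$, $|A_i|=|M_{11}|$, so $|X|=|Q|^2=|M_{12}|^2$ and $|Y|=|A_1||A_2||A_3||A_4|=|M_{11}|^4$. Thus $|X||Y|=|M_{12}|^2|M_{11}|^4$, while $|S|=|Q|^4=|M_{12}|^4$. Since $|M_{12}:M_{11}|=12$, we have $|M_{11}|^4=|M_{12}|^4/12^4$, hence $|X||Y|=|S|/12^2<|S|$. Therefore $|XY|\le |X||Y|<|S|$, so $XY\ne S$. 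This is clean and requires no structural analysis at all.

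However, I suspect the intended proof is more delicate, because in the last section $X$ and $Y$ may arise as $X=H$ and $Y$ a point stabilizer (or vice versa) in a situation where a naive order count is \emph{not} available — for instance if the $A_i$ were chosen so that $Q=A_iA_j$ for the relevant pairs, making $|X||Y|$ potentially as large as $|S|$. In that sharper reading one cannot simply count. The plan in that case would be to project onto the first two coordinates $Q_1\times Q_2$: under $\pi_{12}$, $X$ maps onto the full diagonal $\{(p,p)\mid p\in Q\}$ and $Y$ maps onto $A_1\times A_2$, and $XY=S$ would force $(X\pi_{12})(Y\pi_{12})=Q^2$, which by Lemma~\ref{qabt} applied to the factorization $Q=A_1A_2$ (rewriting the diagonal factorization of $Q^2$ as a factorization of $Q$ via $q=a_1a_2^{-1}$, as in Lemma~\ref{hab}) is possible only if $A_1\cong A_2\cong M_{11}$ sits in the $M_{12}$-row of Table~\ref{factiso} — which it does, so this alone is not a contradiction. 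One then has to use the third and fourth coordinates: the diagonal relation on coordinates $3,4$ of $X$ is $(q,q)$, while $Y$ contributes $(a_2\psi,a_4)$ with $a_2$ \emph{already pinned down} by the second coordinate. The key observation is that the constraint on coordinate $3$ of $Y$ is \emph{linked} to coordinate $2$ through $\psi$, so $Y$ is not a direct product of strips with pairwise disjoint supports in the finest decomposition — rather $A_2$ and $A_3$ are tied together — and this is precisely the configuration ruled out by combining Scott's Lemma with a careful factorization count.

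Concretely, the step I would actually carry out is: suppose $XY=S$. Write a general element of $S$ as $(x_1,x_2,x_3,x_4)$; membership forces, for each choice of target, a solution $(p,p,q,q)(a_1,a_2,a_2\psi,a_4)=(pa_1,pa_2,qa_2\psi,qa_4)$. Equating with an arbitrary target and eliminating $p$ and $q$, one finds that the map $Q^4\to Q^2$, $(x_1,x_2,x_3,x_4)\mapsto (x_1^{-1}x_2,\ x_3^{-1}x_4)$ must be surjective when restricted to the set of attainable elements, which translates to the requirement that $A_1 A_2 = Q$ \emph{and} $(A_2\psi)^{-1}A_4 = Q$ hold \emph{simultaneously with $a_2$ shared}. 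I would then count: the number of attainable targets is at most $|X||Y| = |M_{12}|^2|M_{11}|^4$, and show this is strictly smaller than $|S|=|M_{12}|^4$ because of the repeated $a_2$; the sharing means $Y$ has rank (as a set) only $|M_{11}|^3$ in the relevant product, not $|M_{11}|^4$ worth of independent freedom across the coordinates where it needs to cover.

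The main obstacle I anticipate is handling the linkage through $\psi$ rigorously: one must argue that even though $Q=M_{12}$ does factorize as a product of two copies of $M_{11}$ (Table~\ref{factiso}, row~2), there is no way to make coordinates $2,3$ cooperate, because $a_2$ cannot simultaneously be ``free for the $1,2$-factorization'' and ``free for the $3,4$-factorization.'' If the easy order count ($|X||Y|<|S|$) is in fact valid in the context where this lemma is invoked — which the displayed inequality $|M_{11}|^4<|M_{12}|^3$ (equivalently $|M_{11}|<12^2\cdot|M_{12}|/12^2$, i.e.\ $|M_{11}|<|M_{12}|$) does not immediately give, since $|X||Y|=|M_{12}|^2|M_{11}|^4$ versus $|M_{12}|^4$ needs $|M_{11}|^4<|M_{12}|^2$, i.e.\ $|M_{11}|^2<|M_{12}|$, which is \emph{false} as $|M_{11}|=7920$ and $|M_{12}|=95040$ — then the order count genuinely \emph{fails} and the structural argument above is forced. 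So the hard part is unavoidable: I must run the projection-and-factorization argument, use Lemma~\ref{qabt} to pin $A_iA_j=Q$ down to the $M_{11}$ case, and then exploit the $\psi$-linkage between the second and third coordinates to contradict surjectivity of $XY$ onto $S$.
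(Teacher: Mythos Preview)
Your proposal has a computational error at the outset and then a genuine gap where the argument must bite.

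First, $|Y|\neq|M_{11}|^4$: the third coordinate of $Y$ is $a_2\psi$, determined by the second, so $|Y|=|A_1||A_2||A_4|=|M_{11}|^3$. With this correction the honest order count reads
\[
\frac{|X||Y|}{|S|}=\frac{|M_{12}|^2|M_{11}|^3}{|M_{12}|^4}=\frac{|M_{11}|^3}{|M_{12}|^2}=\frac{|M_{11}|}{144}=55,
\]
so $|X||Y|=55|S|>|S|$ and counting alone cannot rule out $XY=S$. You eventually notice the sharing brings $|Y|$ down to $|M_{11}|^3$, but you never redo the count cleanly, and your stated inequality ``$|M_{11}|^2<|M_{12}|$ is false'' is checking the wrong thing.

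The real gap is in the structural part. Your projections onto coordinates $\{1,2\}$ and $\{3,4\}$ are exactly right and, via Lemma~\ref{hab}, yield $Q=A_1A_2=A_3A_4$. But from there you only say you will ``exploit the $\psi$-linkage to contradict surjectivity'' without supplying any mechanism. The paper's proof does supply one, and it is not something a generic factorization argument produces: one computes $X\cap Y$ explicitly. Setting $C_1=A_1\cap A_2$ and $C_2=(A_3\cap A_4)\psi^{-1}$ (both subgroups of $A_2$ of order $|M_{11}|^2/|M_{12}|=660$), one checks
\[
X\cap Y=\{(c,c,c\psi,c\psi)\mid c\in C_1\cap C_2\}\cong C_1\cap C_2,
\]
and the hypothesis $S=XY$ forces $|X\cap Y|=55$. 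Hence $|C_1C_2|=660^2/55=7920=|A_2|$, so $A_2=C_1C_2$. The killing blow is a fact about $M_{11}$: it has a \emph{unique} conjugacy class of subgroups of order $660$, so $C_1$ and $C_2$ are conjugate in $A_2$, which is incompatible with $A_2=C_1C_2$. This specific piece of subgroup-structure information about $M_{11}$ is the missing idea in your plan; without it (or something equivalent) the $\psi$-linkage alone does not produce a contradiction.
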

\begin{proof}
 Let $D_1=\{(p,p,1,1)\mid p\in Q\}$ and $D_2=\{(1,1,q,q)\mid q\in Q\}$. 
 We have that $X=D_1\times D_2\cong Q^2$ is the direct product of two full strips of $S$ and $Y\cong (M_{11})^3$ is the direct product of
 three strips of $S$, where the second strip is a diagonal subgroup of $Q^2$. 
 
 Assume that $Q^4=XY$ and consider the projections $\pi_{12}\colon S\rightarrow Q^2$ and $\pi_{34}\colon S\rightarrow Q^2$, where $\pi_{12}$ projects 
onto the first two coordinates and $\pi_{34}$ projects onto the last two coordinates.
Applying these projections to $Q^4=XY$, we obtain that 
\begin{align}
 Q^2=&D_1(A_1\times A_2),\label{q21}\\
 Q^2=&D_2(A_3\times A_4).\label{q22}
\end{align}
It follows from Lemma~\ref{hab} that $Q=A_1 A_2=A_3 A_4$.
Thus,
$$
|A_1\cap A_2|=|A_3\cap A_4|=|M_{11}|^2/|M_{12}|=660.
$$
Set $C_1=A_1\cap A_2$ and $C_2=(A_3\cap A_4)\psi^{-1}$. 
Note that 
$$X\cap Y=\{(c,c,c\psi,c\psi)\mid  c\in C_1\cap C_2\}\cong C_1\cap C_2.$$
Since $Q^4=XY$, 
$$|X\cap Y|=\frac{|X||Y|}{|Q|^4}=\frac{|M_{11}|^3}{|M_{12}|^2}=55.$$
As $X\cap Y\cong C_1\cap C_2\leq A_2$ and
$$|C_1 C_2|=\frac{|C_1||C_2|}{|X\cap Y|}=\frac{660^2}{55}=7920=|A_2|,$$
we find that $A_2=C_1 C_2$. Since $M_{11}$ has a unique  conjugacy
class of subgroups of order  660 \cite[p.18]{atlas},
$C_1$ and $C_2$ are conjugate in $A_2$, which
contradicts to $A_2=C_1C_2$. Then $Q^4\neq XY$ and the result is proved.
\end{proof}

\section{Quasiprimitive permutation groups}\label{quasiprim}

A permutation group is \textbf{quasiprimitive} \index{quasiprimitive} if all its nontrivial normal subgroups are transitive.
For example, primitive permutation groups are quasiprimitive. Since a transitive 
simple group is always quasiprimitive, but not necessarily primitive, the class of quasiprimitive permutation groups
is strictly larger than the class of primitive permutation groups. 


Finite primitive and quasiprimitive groups were classified by
the respective versions of the O'Nan--Scott Theorem;
see~\cite[Chapter~7]{bookcsaba}.
In this classification, we distinguish between 8 classes of finite primitive groups,
namely {\sc HA}, {\sc HS}, {\sc HC}, {\sc SD}, {\sc CD}, {\sc PA}, 
{\sc AS}, {\sc TW}, and 8~classes of finite quasiprimitive groups, namely 
{\sc HA}, {\sc HS}, {\sc HC}, {\sc Sd}, {\sc Cd}, {\sc Pa}, 
{\sc As}, {\sc Tw}. As the notation suggests, quasiprimitive groups in classes {\sc HA}, {\sc HS} and {\sc HC} are always primitive. Here we only give a brief summary of these classes;
the interested reader can find a more detailed treatment
in~\cite[Chapter~7]{bookcsaba}.
The type of a primitive or quasiprimitive group $G$ can be
recognized from the structure and the 
permutation action of its socle, denoted $\soc(G)$. Let $G\leq\sym(\Omega)$ 
be a quasiprimitive 
permutation group, let $M$ be a minimal normal subgroup of $G$, and 
let $\omega\in\Omega$. Note that $M$ is a characteristically simple group.
If $M$ is nonabelian, then by a subdirect subgroup of $M$ we mean one that
 is 
subdirect with respect to the unique finest
direct decomposition of $M$ (see Definition~\ref{stripdiag}).
For a group $G$, the holomorph $\hol G$ is the semidirect product
$G\rtimes\aut G$ viewed as a permutation group acting on the set $G$;
see~\cite[Section~3.3]{bookcsaba} for more details. The
main characteristics of $G$ and $M$ in each primitive and quasiprimitive type
are as follows.

{\sc HA:} $M$ is abelian, $\cent GM=M$ and $G\leq \hol M$.
A quasiprimitive permutation group in this class is always primitive.

{\sc HS:} $M$ is nonabelian, simple, and regular; 
$\soc(G)=M\times\cent GM\cong M\times M$ and 
$G\leq\hol M$. Such a quasiprimitive permutation group is always primitive.

{\sc HC:} $M$ is nonabelian, nonsimple, and regular; $\soc(G)=M\times\cent
GM\cong M\times M$ and $G\leq\hol M$. A quasiprimitive permutation group
of this type is always primitive.

{\sc Sd:} $M$ is nonabelian and nonsimple; $M_\omega$ is a 
simple subdirect subgroup of $M$ and $\cent GM=1$. If, 
in addition, $G$ is primitive, then the type of $G$ is {\sc SD}.

{\sc Cd:} $M$ is nonabelian and nonsimple; $M_\omega$ is a 
nonsimple subdirect subgroup of $M$ and  $\cent GM=1$. If, 
in addition, $G$ is primitive, then the type of $G$ is {\sc CD}.

{\sc Pa:} $M$ is nonabelian and nonsimple; $M_\omega$ is  
not a subdirect subgroup of $M$ and $M_\omega\neq 1$; $\cent GM=1$. If, 
in addition, $G$ is primitive, then the type of $G$ is {\sc PA}.

{\sc As:} $M$ is nonabelian and simple; $\cent GM=1$. If, 
in addition, $G$ is primitive, then the type of $G$ is {\sc AS}.

{\sc Tw:} $M$ is nonabelian and nonsimple; $M_\omega=1$; $\cent GM=1$. If,
in addition, $G$ is primitive, then the type of $G$ is {\sc TW}.

Note that if $G$ is a primitive permutation group of type HS or HC, then
there exists a primitive group $\overline G$ of type SD or CD, respectively,
that contains $G$ as a subgroup of index~2
(see \cite[Corollary~3.11]{bookcsaba}). Hence in the proof of the
inclusion  $H\leq G$ in Theorem~\ref{main}, the cases when
$G$ has type HS or HC can be reduced to the cases when $G$ has type SD or CD,
respectively.

\section{Examples of transitive characteristically simple subgroups}
\label{sect:examples}

In this section we describe some
examples to show that all the possibilities described
in Theorem~\ref{theorem3} arise. In Examples~\ref{ex0}--\ref{ex4},
$T$ is a nonabelian finite simple group.

\begin{exem}[$G$ has type HA]\label{ex0}
  The following example goes back to at least D.~G.~Higman~\cite[Lemma~4]{higman}.
  Suppose that $R=\mbox{SL}(3,2)$. Then $R$ acts irreducibly on
  $V=\F_2^3$ and set $G=V\rtimes R$. It is well-known (see for
  example~\cite[Proposition~5.2]{prginclusionprim}) that $G$ has a transitive
  simple subgroup $T$ isomorphic to $R$. Letting $W=G\wre S_r$ with $r\geq 2$,
  the direct product $T^r$ is a transitive characteristically simple
  subgroup in the
  primitive permutation group $W$ of type HA acting on $\Omega=\F_2^{3r}$. Hence $T^r$
  is contained in any primitive subgroup of $\F_2^{3r}\rtimes \mbox{GL}(2,3r)$
  that contains $W$.
  These inclusions are as in Theorem~\ref{theorem3}(0).
  \end{exem}

\begin{exem}[$G$ has type {\sc As}]\label{ex1}
  Set $H=T^k$ with $k\geq 2$.
  Suppose that $X$ is a corefree subgroup of $H$ and consider the right coset
  space $\Omega=[H:X]$. Then $H$ can be viewed as a transitive
  subgroup of $Q=\mbox{Alt}(\Omega)$
  which is a  primitive permutation group of type AS.
  These examples satisfy the conditions of Theorem~\ref{theorem3}(1).
\end{exem}

\begin{exem}[$G$ has type {\sc Sd} or HS]\label{ex2}
  Suppose that $T$ and $Q$ are as in one of the rows of Table~\ref{factiso}.
  Set $D$ to be the diagonal subgroup
  $$
  D=\{(q,q)\mid q\in Q\}
  $$
  of $Q^2$ and let $\Omega$ denote the right coset space $[Q^2:D]$.
  The normalizer $\norm{\sym(\Omega)}{Q^2}$ is a primitive group of
  type SD whose abstract group structure is $(Q^2\cdot\mbox{Out}(Q))
  \rtimes C_2$. Suppose that
  $G\leq\norm{\sym(\Omega)}{Q^2}$ 
   such that
  $G$ contains $Q^2$. Then $G$ is a primitive permutation group of type
  SD or HS depending on the projection of $G$ on $C_2$. 
    Assume that $T_1$ and $T_2$ are subgroups of $Q$ such that
    $T_1\cong T_2\cong T$ and $T_1T_2=Q$ and $H=T_1\times T_2$. Viewing
    $H$ as a subgroup of $Q^2$, easy calculation shows that
  $DH=Q^2$ and so $H$ is a transitive
    characteristically simple subgroup of $G$.
    These examples are as in Theorem~\ref{theorem3}(2).
\end{exem}

\begin{exem}[$G$ has type {\sc Pa}]\label{ex3}
  Examples for Theorem~\ref{theorem3}(3) can be constructed by wreathing
  smaller examples. Suppose that $T$ is a simple transitive
  subgroup of a nonabelian simple group
  $Q$ acting on a set $\Gamma$. Then, for $r\geq 2$, the wreath product
  $W=Q\wre S_r$ contains the transitive characteristically simple subgroup $T^r$.
  Furthermore, the inclusion $T^r\leq W$ is as in Theorem~\ref{theorem3}(3)(a).

  Suppose that $H$ and $Q$ are as in Example~\ref{ex1}. Then, for $r\geq 2$,
  $H^r$ is contained in $Q\wre S_r$ and this inclusion is as in
  Theorem~\ref{theorem3}(3)(c).

  Finally, let $Q$ and $T$ be as in one of the rows of Table~\ref{table2}.
  Then $T$ can be embedded as a transitive subgroup of $G=Q\wre S_2$, and so,
  for $r\geq 2$, $T^r$ is a transitive characteristically simple subgroup of
  $W=G\wre S_k$. The inclusion $T^r\leq W$ is as in
  Theorem~\ref{theorem3}(3)(b).
\end{exem}

\begin{exem}[$G$ has type {\sc Cd} or HC]\label{ex4}
  Suppose that $G$ and $H$ are as in Example~\ref{ex2}. If $r\geq 2$, then
  the wreath product $W=G\wre S_r$ is a primitive group of type CD or
  HC  (depending on the type of $G$) that contains
  the transitive characteristically simple subgroup $H^r=T^{2r}$.
  The inclusion $H\leq W$ is as in Theorem~\ref{theorem3}(4).
  \end{exem}

\section{The proof of Theorem~\ref{imbase}}\label{sectionchar}

This section contains the proof of Theorem \ref{imbase}.
We start by  stating a number theoretic result
which is a corollary of Legendre's Formula for the largest prime-power that
divides $n!$.

\begin{lemma}\label{prime}
 Given natural numbers $p,n\geq 2$, the following are valid.  
 \begin{enumerate}
  \item $p^n\nmid n!$.
  \item If $p^{n-1}\mid n!$, then $p=2$ and $n$ is a power of $2$.
  \item $4^{n-1}\nmid n!$.
 \end{enumerate}
\end{lemma}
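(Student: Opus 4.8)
The plan is to derive all three parts from Legendre's formula. Recall that for a prime $q$ the $q$-adic valuation of $n!$ is
\[
v_q(n!)=\sum_{i\geq 1}\left\lfloor\frac{n}{q^i}\right\rfloor=\frac{n-s_q(n)}{q-1},
\]
where $s_q(n)$ denotes the sum of the base-$q$ digits of $n$; since $n\geq 1$ we have $s_q(n)\geq 1$, and therefore the single inequality
\[
v_q(n!)\leq\frac{n-1}{q-1}\leq n-1
\]
holds for every prime $q$ and every $n\geq 1$. Everything below is an elementary consequence of this bound.

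For part~(1) I would suppose $p^n\mid n!$ and choose a prime $q$ dividing $p$; then $q^n\mid p^n\mid n!$, so $v_q(n!)\geq n$, contradicting the displayed bound, and hence $p^n\nmid n!$. For part~(2) I would assume $p^{n-1}\mid n!$ and again take a prime divisor $q$ of $p$: as before $v_q(n!)\geq n-1$, so $(n-1)/(q-1)\geq n-1$, which since $n\geq 2$ forces $q-1\leq 1$, i.e.\ $q=2$; thus $p$ is a power of $2$. If $p\geq 4$ then $v_2(p^{n-1})=(n-1)\,v_2(p)\geq 2(n-1)>n-1$ (using $n\geq 2$), contradicting $v_2(n!)\leq n-1$, so $p=2$; and finally from $v_2(n!)=n-s_2(n)\geq n-1$ one gets $s_2(n)\leq 1$, hence $s_2(n)=1$ and $n$ is a power of $2$. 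Part~(3) is then immediate: if $4^{n-1}\mid n!$, part~(2) applied with $p=4$ would give $4=2$; alternatively one argues directly that $4^{n-1}\mid n!$ forces $v_2(n!)\geq 2(n-1)>n-1$, which again contradicts $v_2(n!)\leq n-1$ for $n\geq 2$.

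There is no serious obstacle here: the only real work is recalling Legendre's formula together with the digit-sum identity, and the only point requiring a moment's care is that $p$ is permitted to be composite. This is dealt with throughout by passing to a prime divisor $q$ of $p$, and, in part~(2), by separately comparing $2$-adic valuations in order to rule out proper powers of $2$.
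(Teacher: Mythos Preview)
Your proof is correct and follows exactly the route the paper indicates: the paper does not give a proof but states the lemma as a corollary of Legendre's formula, and you have filled in the details via the digit-sum identity $v_q(n!)=(n-s_q(n))/(q-1)$, taking appropriate care of the case where $p$ is composite by passing to a prime divisor. There is nothing to add.
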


Next we review some concepts related to subgroups of wreath products in product
action.
Let $\Gamma$ be a finite set such that $|\Gamma|\geq 2$, let $r\geq 2$, and let $W=\sym(\Gamma)\wre S_r$ be considered as
a permutation group on $\Omega=\Gamma^r$ in product action.
An element of $W$ is written as $(a_1,\ldots,a_r)b$ where
$a_i\in\sym(\Gamma)$ for all $i\in\interv r$, and $b\in S_r$.
Suppose that $X$ is a subgroup of $W$. For $j\in\ir$, 
we define the \textbf{$j$-th component}
\index{$j$-th component}%
$\comp Xj$  of $X$ as follows. Suppose that
$W_j$ is the stabilizer in $W$ of $j$ under the permutation representation 
$\pi\colon W\rightarrow S_r$. Then 
\begin{equation}\label{dp}
W_j=\sym(\Gamma)\times(\sym(\Gamma)\wre S_{r-1}),
\end{equation}
where the first
factor of the direct product acts on the $j$-th coordinate, while the second
factor acts on the other coordinates. In particular, `$S_{r-1}$' is taken 
to be the stabilizer of $j$ in $S_r$. We define $\comp Xj$ as the 
projection of $X_j=X\cap W_j$ onto the first factor of $W_j$.
We view $\comp Xj$ as a subgroup of $\sym(\Gamma)$.
The following theorem was stated in~\cite[Theorem~1.2]{embedding};
see also~\cite[Corollary~5.17]{bookcsaba}.

\begin{teor}\label{thtrans}
If $X$ is a transitive subgroup
of $W$, then each component of $X$ is transitive on $\Gamma$. 
Moreover, if $X$ acts transitively on
$\ir$, 
then each component of the intersection $X \cap (\sym(\Gamma))^r$ 
is transitive on $\Gamma$.
\end{teor}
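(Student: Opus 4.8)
The plan is to reduce both assertions to a single divisibility obstruction supplied by Lemma~\ref{prime}. First I would record the dictionary that makes the components visible. For a fixed $j$, let $\mathcal Q_j$ be the partition of $\Omega$ according to the value of the $j$-th coordinate. Since an element $(a_1,\dots,a_r)b\in W_j$ fixes $j$ under $\pi$, it sends a point with $j$-th coordinate $\gamma$ to one with $j$-th coordinate $\gamma^{a_j}$; hence $X_j=X\cap W_j$ preserves $\mathcal Q_j$, and its induced action on the $|\Gamma|$ classes of $\mathcal Q_j$ is exactly the action of $\comp Xj\leq\sym(\Gamma)$ on $\Gamma$. So ``$\comp Xj$ transitive on $\Gamma$'' means ``$X_j$ transitive on the classes of $\mathcal Q_j$''. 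To reduce the first assertion to the second, let $O_1,\dots,O_t$ be the orbits of $\pi(X)$ on $\ir$ and fix $j\in O_s$. Restricting coordinates and top permutations to $O_s$ gives a homomorphism $X\to\sym(\Gamma)\wre\sym(O_s)$ whose image $X_{(s)}$ is transitive on $\Gamma^{O_s}$ (the projection $\Gamma^r\to\Gamma^{O_s}$ is $X$-equivariant), has transitive top group on $O_s$, and has the same $j$-th component as $X$. If $|O_s|=1$ then every element of $X$ fixes position $j$, so $\comp Xj=\rho_j(X)$ is transitive because $X$ is; if $|O_s|\geq 2$ the claim follows from the ``moreover'' statement applied to $X_{(s)}$. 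Thus it suffices to prove the second assertion, which is the heart of the argument.

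So assume $\pi(X)$ is transitive on $\ir$ and set $K=X\cap\sym(\Gamma)^r$, a normal subgroup with $X/K\cong\pi(X)$, whose components are the coordinate projections $\rho_i(K)$. A short computation with the wreath-product multiplication (after fixing the product-action convention) gives, for $x=(a_1,\dots,a_r)b\in X$, the identity $\rho_l(K)=a_l\,\rho_{l^{b}}(K)\,a_l^{-1}$. Since $\pi(X)$ is transitive, all $\rho_i(K)$ are therefore conjugate in $\sym(\Gamma)$; in particular they share the same number $d$ of orbits on $\Gamma$, with orbit partitions $\Sigma_i$, and each $a_i$ maps $\Sigma_i$ bijectively onto $\Sigma_{i^{b}}$. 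If $d=1$ we are done, so suppose $d\geq 2$. I would then define a partition $\mathcal B$ of $\Omega$ by declaring $\omega\sim\omega'$ when $\omega_i$ and $\omega_i'$ lie in the same $\Sigma_i$-class for every $i$. The conjugation identity makes $\mathcal B$ invariant under all of $X$, its classes are indexed by $\prod_i\Sigma_i\cong\{1,\dots,d\}^r$, and the induced action identifies the block quotient $\bar X$ with a subgroup of $\sym(\{1,\dots,d\})\wre S_r$ in product action on $\{1,\dots,d\}^r$, with top group $\pi(X)$.

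The decisive observation is that $\bar X$ has trivial base group. An element of $K$ fixes every $\Sigma_i$-class in each coordinate, hence fixes every block, so $K$ lies in the kernel of $X\to\bar X$; conversely, because $d\geq 2$, any element acting trivially on the blocks must have trivial top, so it lies in $K$. Thus this kernel equals $K$, the base group of $\bar X$ is trivial, and $\bar X\cong\pi(X)\leq S_r$. Since $\bar X$ is transitive on the $d^r$ blocks, $d^r$ divides $|\bar X|$, which divides $r!$; as $d\geq 2$ this yields $2^r\mid r!$, contradicting Lemma~\ref{prime}(1). Hence $d=1$, every $\rho_i(K)$ is transitive on $\Gamma$, and the theorem follows.

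The main obstacle throughout is the top group $S_r$: a transitive element of $X$ may move the position $j$, so one cannot naively adjust a single coordinate while keeping its position fixed, and this is exactly why transitivity of $X$ on $\Omega$ does not by itself force the components to be transitive. The two devices that neutralise it are the reduction to a single $\pi(X)$-orbit of coordinates, which legitimises the assumption that the top group is transitive, and the recognition that an intransitive base projection collapses the block quotient $\bar X$ to a group with \emph{trivial} base group and transitive top --- turning a structural question into the clean arithmetic impossibility $2^r\nmid r!$. The only genuinely delicate point is the index bookkeeping in the identity $\rho_l(K)=a_l\rho_{l^{b}}(K)a_l^{-1}$, which must be derived once the product-action convention has been pinned down.
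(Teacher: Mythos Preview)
The paper does not prove Theorem~\ref{thtrans}; it merely quotes it from \cite[Theorem~1.2]{embedding} (see also \cite[Corollary~5.17]{bookcsaba}). So there is no in-paper argument to compare against, and your proposal must be judged on its own merits.

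Your argument is correct. The reduction of the first assertion to the second via restriction to a single $\pi(X)$-orbit $O_s$ is sound: the restriction homomorphism is well defined because $\pi(X)$ stabilises $O_s$ setwise, it carries $X\cap W_j$ onto $X_{(s)}\cap (W_{(s)})_j$, and hence preserves the $j$-th component. For the main step, the conjugation identity (in whatever precise form the chosen convention yields) does show that the $\rho_i(K)$ are pairwise $\sym(\Gamma)$-conjugate, so they share a common orbit count $d$. Your claim that the kernel of $X\to\bar X$ equals $K$ when $d\geq 2$ is also correct: if $b\neq 1$ moves some $j$ to $i\neq j$, then the $i$-th coordinate class of the image of a block is determined by the block's $j$-th input class rather than its $i$-th, and since these can be varied independently over $d\geq 2$ values, such an element cannot fix every block. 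Hence $\bar X\cong X/K\cong\pi(X)$ acts transitively on $d^r$ blocks, so $d^r\mid |\pi(X)|\mid r!$; with $d\geq 2$ and $r\geq 2$ this forces $2^r\mid r!$, contradicting Lemma~\ref{prime}(1).

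One small expository point: in the reduction you write that the first assertion ``follows from the moreover statement applied to $X_{(s)}$''. Strictly, the moreover clause yields transitivity of the $j$-th component of $X_{(s)}\cap\sym(\Gamma)^{|O_s|}$, which is contained in, but may be smaller than, $\comp{X_{(s)}}{j}=\comp{X}{j}$. The inclusion of course suffices, but it is worth saying so explicitly.
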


We turn to the proof of Theorem~\ref{imbase}.

\begin{proof}[Proof of Theorem~\ref{imbase}]
Suppose that $H=T_1\times\cdots\times T_k=T^k$ for some nonabelian 
finite simple group $T$. Suppose, as above, 
that $\pi:W\rightarrow S_r$ is the natural 
projection. Let $B$ be the base group $\sym(\Gamma)^r$ of $W$. Then 
$B=\ker\pi$. Assume for contradiction that $H\not\leq B$; that is $H\pi\neq 1$. 

First we assume that $H\pi$ is transitive on $\ir$. The case when 
$H\pi$ is intransitive will be treated afterwards. 
Set $H_B=H\cap B$. Then $H_B$ is a normal subgroup of $H$ 
and, by Lemma \ref{scottlemma}(2), it is of the form $T^s$, with some $s$. Further, $H=H_B\times \overline{H}_B$ 
where similarly $\overline{H}_B=T^{k-s}$,  and $\overline{H}_B$ acts
transitively and faithfully by $\pi$ on $\ir$. For $j\in\ir$, consider the component $\comp {H_B}j$ as a permutation group on 
$\Gamma$. By Theorem~\ref{thtrans}, $\comp {H_B}j$ is transitive on 
$\Gamma$ for all $j$.

\medskip

\noindent{\em Claim 1.} $\comp{H_B} j\cong H_B$ for all $j$.
\medskip

\noindent{\em Proof of Claim 1.}
  Suppose, for $j\in\ir$, that $\sigma_j:\sym(\Gamma)^r\rightarrow\sym(\Gamma)$
  denotes the $j$-th coordinate projection. Then  $\comp{H_B}j\cong H_B/(\ker\sigma_j\cap H_B)$. 
Let $m$ be an element of $\ker\sigma_1\cap H_B$. Thus $m=(1,m_2,\ldots,m_r)$ with
$m_j\in\comp{H_B}j$. Let $j\in\ir$. 
Since $\overline{H}_B$ is transitive on $\ir$, there is 
some element $g=(g_1,\ldots,g_r)h$ of $\overline{H}_B$ such that $1(g\pi)=1h=j$. 
Then 
$
m^g=(1,m_2,\ldots, m_r)^g=
(1,m_2^{g_2},\ldots,m_r^{g_r})^h,
$
and so the $j$-th coordinate of $m^g$ is 1. Hence $m^g\in\ker\sigma_j\cap H_B$, and then
$(\ker\sigma_1\cap H_B)^g\leq\ker\sigma_j\cap H_B.$
Analogously, the same argument above shows that 
$\ker\sigma_j\cap H_B\leq(\ker\sigma_1\cap H_B)^g$, 
and so $\ker\sigma_j\cap H_B=(\ker\sigma_1\cap H_B)^g$.
On the other hand, $\ker\sigma_1\cap H_B$ is a subgroup
of $H_B$ and $\overline H_B$ centralizes $H_B$, and so 
$\ker\sigma_1\cap H_B=\ker\sigma_j\cap H_B$  for all $j$. Thus $\ker\sigma_1\cap H_B$ acts trivially on 
$\Omega$, and so $\ker\sigma_1\cap H_B=1$, which gives $\ker\sigma_j\cap H_B=1$ for all $j$.
Therefore, $\comp{H_B} j\cong H_B$ for all~$j$, which proves Claim 1.
\medskip

Thus the restrictions to $H_B$ of the projection maps $\sigma_j$ are 
monomorphisms. Then $\beta_j=\sigma_1^{-1}\sigma_j:\comp{H_B}1\rightarrow 
\comp{H_B}j$ is an isomorphism for all $j$. As a consequence, every element $m\in H_B$
can be expressed uniquely as $m=(y,y\beta_2,\ldots,y\beta_r)$, for some $y\in\comp{H_B}1$.

\medskip

\noindent{\em Claim 2.} For all $j\in\ir$, there is some element
$x_j\in\sym(\Gamma)$ such that $y\beta_j=y^{x_j}$ for all $y\in\comp{H_B}1$.
\medskip

\noindent{\em Proof of Claim 2.}
Suppose that $y\in \comp{H_B}1$. Then $m=(y,y\beta_2,\ldots,y\beta_r)\in 
H_B$. Let $j\in\ir$ and, using the transitivity of $\overline{H}_B$ on $\ir$,  
suppose that $g=(g_1,\ldots,g_r)h\in \overline{H}_B$ is such that $1(g\pi)=1h=j$. 
Then $g$ centralizes $m$ and hence 
$$
(y,y\beta_2,\ldots,y\beta_r)=m^g=(y^{g_1},(y\beta_2)^{g_2},\ldots,(y\beta_r)^{g_r})^h.
$$
Comparing the $j$-th coordinates in the two sides of the last 
equation, we find that $y\beta_j=y^{g_1}$. Taking $x_j=g_1$, thus we have $y\beta_j=y^{x_j}$, which proves Claim 2.
\medskip

\noindent{\em Claim 3.} If $\Sigma$ is a $H_B$-orbit in $\Omega$, then $|\Sigma|=|\Gamma|$.
\medskip

\noindent{\em Proof of Claim 3.}
Since $H$ is transitive on $\Omega$ and $H_B\unlhd H$, all the 
$H_B$-orbits have the same size. Hence it suffices to show the claim 
for just one $H_B$-orbit. Choose the elements $1,x_2,\ldots,x_r$ as in the 
previous claim, let $\gamma\in\Gamma$ and consider the element
$\omega=(\gamma,\gamma x_2,\ldots,\gamma x_r)$. 
Suppose that $m\in H_B$. By the previous claim, $m$ has
the form $m=(y,y^{x_2},\ldots,y^{x_r})$ for some $y\in\comp{H_B}1$. Hence
$\omega^m=(\gamma y,\gamma y x_2,\ldots,\gamma y x_r)$. Thus $m$ stabilizes 
$\omega$ if, and only if, $y\in\sym(\Gamma)$ stabilizes $\gamma$. Thus 
$(H_B)_\omega=(\comp{H_B}1)_\gamma$. So by applying the Orbit-Stabilizer Theorem twice, 
and using that $|H_B|=|\comp{H_B}1|$ and that $\comp{H_B}1$ is transitive on $\Gamma$, we have
$$|\omega^{H_B}|=\frac{|H_B|}{|(H_B)_\omega|}=\frac{|\comp{H_B}1|}{|(\comp{H_B}1)_\gamma|}=|\Gamma|,$$
and so Claim 3 is proved.

\medskip

\noindent{\em Claim 4.} The case when $H\pi$ is transitive is impossible.
\medskip

\noindent{\em Proof of Claim 4.}
$H_B$ is a normal subgroup of $H$ and every $H_B$-orbit has size $|\Gamma|$. 
Hence the number of $H_B$-orbits is $|\Gamma|^{r-1}.$ 
Since $H$ is transitive
on $\Omega$, $\overline{H}_B$ is transitive on the set of $H_B$-orbits and 
hence $|\Gamma|^{r-1}\mid |\overline{H}_B|$. Since $\overline{H}_B$ has a
faithful action on $\ir$, 
this leads to $|\Gamma|^{r-1}\mid r !$. Now, since $\Gamma$ is an orbit
for the characteristically simple group $\comp{H_B}1$, we find that 
$|\Gamma|\geq 5$. Hence $|\Gamma|$ is divisible by $p$, where 
$p$ is either an odd prime or $p=4$, which is a contradiction by Lemma \ref{prime}. Then Claim 4 is proved.
\medskip

This completes the proof for the case when $H\pi$ is a transitive subgroup of 
$S_r$. Let us now turn to the case when $H\pi$ is intransitive. 
Recall that $B$ is the base group of $W$. Assuming that $H\not\leq B$
implies that there exists an $H\pi$-orbit $\Delta$ in $\ir$ with size at 
least 2. Set $\overline\Delta=\ir\setminus\Delta$ and
$r_1=|\Delta|$. Then
$H$ can be embedded into
the direct product 
$$W_1\times W_2=
\left(\sym(\Gamma)\wre S_{r_1}\right)\times \left(\sym(\Gamma)\wre S_{r-r_1}
\right)$$
such that the projection $H_1$ of $H$ into $W_1$ acts transitively
on $\underline{r_1}$. Now, since $H$ is transitive on $\Gamma^r$, $H_1$ is also transitive 
on $\Gamma^{r_1}$. Further, as $H$ is characteristically simple, so is $H_1$.
Hence using the theorem in the case when $H\pi$ is transitive gives 
a contradiction. Therefore, $H\leq B$. 
\end{proof}

Since in several classes of quasiprimitive permutation groups, the individual
groups are subgroups in wreath products in product action, Theorem~\ref{imbase}
leads to the following corollary.

\begin{coro}\label{embed}
  Let $G\leq\sym(\Omega)$
  be a finite quasiprimitive permutation group with nonabelian
  socle $S$ and let $H$ be a transitive nonabelian
  characteristically simple subgroup of $G$.
  Let $\alpha\in\Omega$ and assume that $S=Q_1\times\cdots\times Q_r$, where $r\geq 2$ and the $Q_i$ are pairwise $G$-conjugate normal subgroups of $S$ such that 
 \begin{equation}\label{salphaembed}
 S_\alpha=(Q_1\cap S_\alpha)\times\cdots\times(Q_r\cap S_\alpha).
 \end{equation}
 Then $H\leq \norm G{Q_i}$ for all $i\in\ir$. Further, if the $Q_i$ are simple groups, then $H\leq S$.
\end{coro}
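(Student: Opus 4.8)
The plan is to exploit the embedding of $G$ into a wreath product in product action that the hypothesis~\eqref{salphaembed} provides, and then to invoke Theorem~\ref{imbase}. First I would recall the standard fact from the O'Nan--Scott machinery (see~\cite[Chapter~7]{bookcsaba}): when $S=Q_1\times\cdots\times Q_r$ with the $Q_i$ pairwise $G$-conjugate and the point stabilizer decomposes as in~\eqref{salphaembed}, the action of $G$ on $\Omega$ is permutationally isomorphic to a subgroup of $W=\sym(\Gamma)\wre S_r$ acting on $\Gamma^r$ in product action, where $\Gamma$ is the coset space $[Q_1:Q_1\cap S_\alpha]$ with $|\Gamma|\geq 2$. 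Under this identification, the base group $B=\sym(\Gamma)^r$ of $W$ pulls back inside $G$ to a normal subgroup containing $S$, and the top group $S_r$ permutes the $r$ direct factors $Q_i$ exactly as $G$ permutes them by conjugation; in particular $\norm G{Q_i}$ is the preimage in $G$ of the point stabilizer $W_i$ of $i$ under $\pi\colon W\to S_r$ intersected with $G$.

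Next, since $H$ is a transitive nonabelian characteristically simple subgroup of $G\leq W$, and $|\Gamma|\geq 2$ and $r\geq 2$, Theorem~\ref{imbase} applies and yields $H\leq B=\sym(\Gamma)^r$. But an element of $B$ fixes every $i\in\ir$ under $\pi$, hence normalizes each $Q_i$; more precisely $B\cap G\leq\norm G{Q_i}$ for every $i$, because the $Q_i$ are precisely the direct factors of $S$ permuted by the $S_r$-action, and the elements of $B$ induce the trivial permutation of these factors. Therefore $H\leq\norm G{Q_i}$ for all $i\in\ir$, which is the first assertion.

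For the final clause, suppose additionally that each $Q_i$ is a (nonabelian finite) simple group. Then $H\leq\bigcap_{i\in\ir}\norm G{Q_i}$, and this intersection is the normalizer in $G$ of the internal direct decomposition $\{Q_1,\ldots,Q_r\}$ of $S$; since each $Q_i$ is simple, $S=Q_1\times\cdots\times Q_r$ is the unique finest direct decomposition of $S$, so $\bigcap_i\norm G{Q_i}$ equals the full preimage in $G$ of $B\cap G$ under $\pi$, namely the subgroup $N$ of $G$ acting on $\Omega$ with the $Q_i$ as its "coordinate" factors. Now $N$ normalizes each simple factor $Q_i$, so $N/\cent N{Q_i}$ embeds into $\aut(Q_i)$ and $N\cap\sym(\Gamma)$ on the $i$-th coordinate lies between $Q_i\cong\inn(Q_i)$ and $\aut(Q_i)$; since $H$ is a \emph{transitive} characteristically simple subgroup of $N$, the transitivity criterion $N=HS_\alpha$ together with $S_\alpha\leq S$ gives $N=HS$, and then a counting/divisibility argument using $|\out(Q_i)|$ forces $H\leq S$. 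Concretely: $H S_\alpha=N$ and $S S_\alpha=S$ is transitive, so $H$ is transitive iff $N=HN_\alpha$; comparing $H\cong T^k$ with the structure of $N$, whose socle is $S$ and whose outer part is solvable, one checks that a characteristically simple transitive subgroup must already lie in $\soc(N)=S$ — this is where I expect the one genuine subtlety, namely ruling out that $H$ "sticks out" of $S$ into the outer-automorphism part, and it should follow from the fact that $H/(H\cap S)$ is both characteristically simple and isomorphic to a subgroup of $N/S\leq\prod_i\out(Q_i)$, which is solvable, hence $H\cap S=H$, i.e.\ $H\leq S$.

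The main obstacle is thus not the application of Theorem~\ref{imbase} — that part is essentially immediate once the product-action embedding is set up — but rather the last step, where one must argue that being transitive on $\Omega$ prevents $H$ from projecting nontrivially onto the solvable group $N/S$; the cleanest route is the observation that $HS/S\cong H/(H\cap S)$ is a quotient of the nonabelian characteristically simple group $H$, hence is either trivial or nonabelian characteristically simple, and cannot be a nontrivial subgroup of a solvable group, forcing $H=H\cap S\leq S$.
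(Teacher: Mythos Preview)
Your proposal is correct and follows essentially the same route as the paper: embed $G$ into $W=\sym(\Gamma)\wre S_r$ via \cite[Theorem~4.24]{bookcsaba}, apply Theorem~\ref{imbase} to land $H$ in the base group (hence in each $\norm G{Q_i}$), and then use Schreier's Conjecture on $H/(H\cap S)\hookrightarrow\prod_i\out(Q_i)$ to force $H\leq S$. The detour in your third paragraph through transitivity criteria such as ``$N=HS_\alpha$'' is both unnecessary and not quite right (transitivity of $H$ gives $G=HG_\alpha$, not $N=HS_\alpha$, and $N_\alpha$ need not lie in $S$); you should simply drop that and go straight to what you call the ``cleanest route'' at the end, which is exactly the paper's argument and requires no use of transitivity at all once $H\leq\bigcap_i\norm G{Q_i}$ is established. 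One small point worth making explicit: the embedding $N/S\hookrightarrow\prod_i\out(Q_i)$ uses that $\cent G S=1$, which holds because $S=\soc(G)$ is nonabelian.
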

\begin{proof}
Set $\Gamma$ to be the right coset space $[Q_1\colon Q_1\cap S_\alpha]$.
By~\cite[Theorem~4.24]{bookcsaba}, we may assume without loss of generality
that $G$ is a subgroup of $W=\sym(\Gamma)\wre S_r$ acting in product
action on $\Gamma^r$, and so $H$ can also be viewed
as a transitive subgroup of $W$. By Theorem~\ref{imbase},
$H$ is a subgroup of the base group $\sym(\Gamma)^r$ of $W$.
Now~\cite[Theorem~4.24]{bookcsaba} also implies that the conjugation
action of $G$ on the set $\{Q_1,\ldots,Q_r\}$ is equivalent to its action
on $\interv r$ induced by the natural projection $\pi:W\rightarrow S_r$.
Since $\ker\pi=\sym(\Gamma)^r$, we have that $H$ acts trivially on
$\{Q_1,\ldots,Q_r\}$ by conjugation, and so $H\leq\norm G{Q_i}$ holds for all
$i$.

Suppose now that the $Q_i$ are simple. Since $\cent G{\soc(G)}=1$
and since $H\leq\norm G{Q_i}$ for all $i$, 
the group $H$ can be viewed as a subgroup of
$$
\aut S\cap \left(\bigcap_{i=1}^r\norm{\aut S}{Q_i}\right)=\prod_i(\aut Q_i).  
$$
Therefore
$$
H/(H\cap S)\cong
(HS)/S\leq \prod_i (\aut Q_i)/Q_i.
$$
Since Schreier's Conjecture  holds, the group on
the right-hand side of the last display is  soluble.
On the other hand, if $H\cap S\neq H$, then
$H/(H\cap S)$ is nonabelian characteristically simple,
which is impossible. Hence $H\cap S= H$, which
is equivalent to $H\leq S$.
\end{proof}

\section{The proof of Theorem \ref{main}}\label{maintheorem}

In this section we prove Theorem~\ref{main}.
Suppose that $G\leq\sym(\Omega)$ is a finite quasiprimitive
permutation group with nonabelian socle $S$ and let
$H$ be a transitive nonabelian characteristically simple subgroup
of $G$. We are required to show that $H\leq S$.
Our strategy is to analyze each of the possible O'Nan--Scott classes of $G$.

\ves
\textbf{$\boldsymbol G$ has type A{\scriptsize S}:}
$S$ is a simple group and $S\leq G\leq\aut(S)$. 
Then
$$H/(H\cap S)\cong (HS)/S\leq \aut(S)/S=\out(S).
$$
As $S$ is simple, it follows from
Schreier's Conjecture that $\out(S)$ is soluble. Since $H$ is nonabelian and characteristically simple,
$H\cap S=H$. Therefore $H\leq S=\soc(G)$, 
as desired.

\ves
\textbf{$\boldsymbol{G}$ has type HS:} $S=Q_1\times Q_2$
where $Q_1$ and $Q_2$ are simple minimal normal subgroups of $G$.
Furthermore, $S\leq G\leq\hol(Q_1)$
and
$$H/(H\cap S)\cong
(H S)/S\leq G/S\leq \hol(Q_1)/S\cong\out(Q_1).$$
Therefore, arguing as we did for the type A{\scriptsize S}, it follows from 
Schreier's Conjecture that $H\cap S=H$, and so
$H\leq S$.

\ves

\textbf{$\boldsymbol{G}$ has type T{\scriptsize W}:}
$S$ is regular, and so $S$ clearly satisfies equation~\eqref{salphaembed} with
respect to its finest direct decomposition into the direct product of
simple groups.  
Thus, by Corollary \ref{embed},  $H\leq\soc(G)$.

\ves

For the next four O'Nan--Scott types, namely for HC,
P{\scriptsize A}, S{\scriptsize D}, and
{\sc Cd}, we will assume that
$S=Q_1\times\cdots\times Q_r$ where $r\geq 2$ and
the $Q_i$ are pairwise isomorphic
nonabelian simple groups. We let $Q$ denote the common isomorphism type of
the $Q_i$. We define  $\pi_i$ as the coordinate projection
$\pi_i\colon S\rightarrow Q_i$.

\ves

\textbf{$\boldsymbol{G}$ has type HC:}
$S=S_1\times S_2$ where $S_1$ and $S_2$ are nonsimple, regular
minimal normal subgroups of $G$.  Then $r$ is even
and, after possibly reordering the $Q_i$, a point stabilizer in
$S$ is a direct product $D_1\times\cdots\times D_{r/2}$ where
each $D_i$ is a full diagonal subgroup in $Q_{i}\times
Q_{r/2+i}$. Since $r\geq 4$,
Corollary~\ref{embed} applies to the direct decomposition
$S=(Q_1\times Q_{r/2+1})\times\cdots\times (Q_{r/2}\times Q_{r})$
and this gives that $H$ normalizes $Q_i\times Q_{r/2+i}$ for all $i\in\interv{r/2}$.
Since $\cent G{S}=1$, the conjugation action of $H$ on $S$ embeds
$H$ into
$$
X=\aut(Q_1\times Q_{r/2+1})\times\cdots\times\aut(Q_{r/2}\times Q_{r}).
$$
Now $X/S$ is a subgroup of $((\out Q)\wre C_2)^r$, which is a
soluble group by Schreier's Conjecture. Now the usual argument implies that
$H\leq\soc(G)$.

\ves

\textbf{$\boldsymbol{G}$ has type P{\scriptsize A}:}
for all $\alpha\in\Omega$,
$S_\alpha$ is not a subdirect subgroup
of $S$ and $S$ is not regular.
In general, the groups of this class do not satisfy
equation~\eqref{salphaembed} in Corollary \ref{embed}, but it is well-known,
for $\alpha\in\Omega$,
that $G$ has a faithful quotient action on the right coset space
$\omb=[S:P]$ where $P=(S_\alpha\pi_1)\times\cdots\times(S_\alpha\pi_r)$.
Since $H$ also acts transitively on $\omb$ and $P$ is a point stabilizer in
$S$ under this action, equation~\eqref{salphaembed} holds for $P$ with the
finest direct product decomposition of $S$, and so $H\leq S$ must hold.

\ves
\textbf{$\boldsymbol{G}$ has type S{\scriptsize D}:} this is the most difficult
O'Nan--Scott type to deal with. In this case, 
for $\alpha\in\Omega$, $S_\alpha$ is a simple subdirect subgroup
of $S$. Since $S$ is transitive on $\Omega$ and $S_\alpha\cong Q$,  $|\Omega|=|Q|^{r-1}$.
 In this case, $G$ can be considered  as a subgroup  of 
 $\overline{G}=(S\cdot\out(Q))\rtimes S_r$, where $S_r$ permutes
 by conjugation
 the factors of $S$ and $\out(Q)$ acts on $S\cong Q^r$ diagonally;
 see~\cite[Section~7.4]{bookcsaba}.
 
 Consider the extension $\overline{S}=S\cdot\out(Q)$. We have that $\overline{G}$ permutes the elements in $\Sigma=\{Q_1,\ldots,Q_r\}$ and the kernel of this action
 is precisely $\overline{S}$. If we denote by $H_0$ the kernel of $H$ acting on $\Sigma$, we obtain that 
 $H_0=H\cap\overline{S}$. Since $H$ is characteristically simple, we have by Lemma \ref{scottlemma}(2) that $H_0\cong T^{k_0}$ for some integer $k_0$, and there exists a normal subgroup $H_1$ of $H$ such that 
 $H=H_0\times H_1$. It follows from the Isomorphism Theorem and
 from the definition of $\overline{S}$ that 
 \[H_0/(H_0\cap S)\cong (H_0 S)/S\leq\overline{S}/{S}\cong\out(Q).\]
Since $\out(Q)$ is soluble by Schreier's Conjecture, and $H_0$ is nonabelian and characteristically simple, we conclude that $H_0=H_0\cap S$, which means that $H_0\leq S$.   
 
If $H_1=1$, then, since $H_0\leq S$, we obtain at once that $H\leq S$.
Hence it suffices to prove that $H_1=1$. Suppose that 
 $H_1\neq 1$. Since $H_1\cap\overline{S}=1$, $H_1$ permutes the elements in $\Sigma$ faithfully, and so $|H_1|\mid r!$.
 In particular, since the size of the smallest nonabelian simple group is 60, we have $r\geq 5$.
 
 We claim that $H_0\neq 1$. In fact, if that is not the case, then $H=H_1$, so $H_1$ is transitive on $\Omega$.
  Then applying the Orbit--Stabilizer Theorem and the transitivity of $S$, we obtain
  \[\frac{|H_1|}{|(H_1)_\alpha|}=|\Omega|=|Q|^{r-1},\]
  so $|Q|^{r-1}\mid |H_1|$. Since $|H_1|\mid r!$, we obtain that $|Q|^{r-1}\mid r!$. Since the order of every finite nonabelian simple group is divisible 
  by four, this implies that $4^{r-1}\mid r!$, which is not possible by Lemma \ref{prime}. 
  Therefore, $H_0\neq 1$.
  
 Let us analyze the action of $H_0$ on $\Omega$. Since $H_0\normal H$ and $H$ is transitive, the orbits of $H_0$  
 form a block system for $H$. In particular, the  $H_0$-orbits have the 
 same size. By the Orbit--Stabilizer Theorem, it follows that
 $|(H_0)_\alpha|$ is independent of $\alpha$. 
 Therefore, the number of $H_0$-orbits on $\Omega$ is equal to
 \begin{equation}
   \label{orbeq}
   \frac{|Q|^{r-1}}{|\alpha^{H_0}|}=\frac{|Q|^{r-1}}{|T|^{k_0}}|(H_0)_\alpha|.
 \end{equation}
  Since $H=H_0\times H_1$ and $H$ is transitive on $\Omega$, we have that $H_1$ is transitive on the set of $H_0$-orbits.
 Then the Orbit--Stabilizer Theorem gives that the number of $H_0$-orbits divides $|H_1|$. 
 Therefore, the number of $H_0$-orbits divides $r!$.
  From the Isomorphism Theorem we have that
 \begin{equation}\label{pisi}
  Q_i\geq H_0\pi_i\cong H_0/(\ker\pi_i\cap H_0)\cong T^{s_i},
 \end{equation}
where $s_i\geq 0$ for all $i\in\ir$. 

We claim that $s_i\leq 1$ for all $i$.
Suppose, on the contrary, that there exists $i\in\ir$ such that $s_i\geq 2$.
In particular, $Q$ contains a subgroup isomorphic to $T^2$.
As every finite simple group has a cyclic Sylow subgroup
(see~\cite[Theorem~4.9]{simple}),
we can choose a prime $p$ such that the Sylow $p$-subgroups of $Q$ are cyclic.
Since a Sylow $p$-subgroup of $T^2$ is contained
in a Sylow $p$-subgroup of $Q$, we have that $p\nmid |T|$.
 In particular, $p\neq 2$.
Considering the number of $H_0$-orbits in~\eqref{orbeq},
we obtain that $p^{r-1}$ divides the number of
$H_0$-orbits, and so $p^{r-1}$ divides $r!$.
However, this contradicts Lemma \ref{prime}. Therefore,
$s_i\leq 1$ for all $i$, as claimed.

Since $H_0\neq 1$, $s_i= 1$ for some $i\in\ir$. Thus \eqref{pisi} gives that $Q$ has a subgroup isomorphic to $T$. 
Since each $T_i\leq H_0$ is simple,
each $T_i\leq H_0$ is a strip of $S$. We assert that if $i\neq j$, then $\supp(T_i)\cap\supp(T_j)=\varnothing$. 
In fact, if there is $m\in\ir$ such that $T_i\pi_m\cong T$ and $T_j\pi_m\cong T$, then $(T_i\times T_j)\pi_m\cong T^2$.
However, this is impossible, since  $s_m\leq 1$. 
Therefore
\begin{equation}\label{h0di}
 H_0=T_1\times\cdots\times T_{k_0},
\end{equation}
 where each $T_i\cong T$ is a diagonal subgroup of $$\prod_{Q_j\,\in\,\supp(T_i)}Q_j,$$ in such a way that $\supp(T_i)\cap\supp(T_j)=\varnothing$ for all $i\neq j$.
  As a consequence, we obtain that $k_0\leq r$. 
  Assume first that $k_0<r$; we will treat the case $k_0=r$ separately.
 Since $|T|\mid|Q|$, we have by \eqref{orbeq} that $|Q|^{r-k_0-1}$ divides the number of $H_0$-orbits in $\Omega$. As $H_1$ is transitive on the
 set of $H_0$-orbits, this implies that $|Q|^{r-k_0-1}$ divides $|H_1|$.
 
 Let $d_i=|\supp(T_i)|$.  
 Moreover, let $m_1$ be the number of factors $T_i$ for which $d_i\geq 5$, and let $m_2$ be the number of factors $T_i$ such that 
 $d_i<5$. So $m_1+m_2=k_0$. Relabeling if necessary, we can write
 $$H_0=T_1\times\cdots\times T_{m_1}\times T_{m_1+1}\times\cdots\times T_{m_1+m_2},$$
 such that $d_i\geq 5$ if and only if $i\leq m_1$. Set $m_3=r-\sum^{k_0}_{i=1}d_i$; that is,
 $m_3$ is the number of factors $Q_i$ such that $H_0\pi_i=1$. 
 
 Since $H_1$ centralizes $H_0$, we have that $H_1$ centralizes each $T_i$.
 for each $i\leq k_0$. As, for $h_1\in H_1$ and $i\in\interv {k_0}$, we have
 $$(\supp(T_i))^{h_1}=\supp({T_i}^{h_1})=\supp(T_i),$$
 we conclude that each $\supp(T_i)$ is $H_1$-invariant. In particular,
 $H_1$ acts by conjugation on $\supp (T_i)$ for all $i\in\interv{k_0}$, and, since
 $H_1$ is a nonabelian characteristically simple group, this action
 is trivial whenever $|\supp(T_i)|< 5$.
 Hence $H_1$ acts trivially on 
 $$\supp(T_{m_1+1})\dot{\cup}\cdots\dot{\cup}\supp(T_{k_0}).$$ 
 Since the action of $H_1$ on $\Sigma$ is faithful, we obtain that  
 \begin{equation}\label{h1}
 |H_1|\mid (d_1!)\cdots(d_{m_1}!)(m_3!). 
 \end{equation}
As we assumed $H_1\neq 1$, we have from~\eqref{h1} that either $m_1=0$ and $m_3\geq 5$, or $m_1\neq 0$. In both cases 
 we conclude that 
 \begin{equation}\label{sumeven}
  d_1+\cdots+d_{m_1}+m_3-m_1-1>0.
 \end{equation}
  Recall that $|Q|^{r-k_0-1}$ divides $|H_1|$. So~\eqref{h1} implies that
 \begin{equation}\label{orbm1}
  |Q|^{r-k_0-1}\mid(d_1!)\cdots(d_{m_1}!)(m_3!).
 \end{equation}
  On the other hand, we have that $r\geq d_1+d_2+\cdots+d_{m_1}+m_2+m_3$. So 
 $$|Q|^{d_1+d_2+\cdots+d_{m_1}+m_2+m_3}\mid |Q|^r.$$ 
 Since~\eqref{sumeven} is valid and $k_0=m_1+m_2$, we obtain
 $$|Q|^{(d_1-1)+\cdots+(d_{m_1}-1)+m_3-1}=|Q|^{d_1+d_2+\cdots+d_{m_1}+m_2+m_3-m_1-m_2-1}\mid |Q|^{r-k_0-1}.$$
 Therefore, using equation \eqref{orbm1}, we obtain that
 $$|Q|^{(d_1-1)+\cdots+(d_{m_1}-1)+m_3-1}\mid (d_1!)\cdots(d_{m_1}!)(m_3!).$$
 Since $4\mid|Q|$, the previous line gives that
 $$2^{d_1}\cdots 2^{d_{m_1}}2^{m_3}\mid (d_1!)\cdots(d_{m_1}!)(m_3!),$$
 which is a contradiction by Lemma \ref{prime}. This implies that $H_1=1$, which means that 
 if $k_0<r$, then $H\leq S$.
 
 Now consider the case where $k_0=r$. Then \eqref{h0di} implies that $d_i=1$ for all $i\in\ir$. 
 Since each $\supp(T_i)$ is $H_1$-invariant, we have that $H_1$ acts faithfully and trivially on $\Sigma$, 
 thus $H_1=1$. Therefore if $k_0=r$, we also obtain $H=H_0\leq S$.
 
 Therefore, if $G$ has type {\sc Sd}, then $H\leq S$.

\ves

\ves
\textbf{$\boldsymbol{G}$ has type C{\scriptsize D}:} for
$\alpha\in\Omega$, 
$S_\alpha$ is a subdirect subgroup of $S$, but it is not simple.
So Lemma \ref{scottlemma} 
implies that there exist  sets 
 $\overline{\Sigma}=\{S_1,\ldots,S_q\}$ and $\{D_1,\ldots,D_q\}$, where $q\geq 2$, each $D_i$ is a full diagonal subgroup of $S_i$
 and $S_i=\prod_{Q_j\in\supp(D_i)}Q_j$, such that
 $S=S_1\times\cdots\times S_q$, $G_\alpha$ acts transitively by conjugation on $\overline{\Sigma}$ and, 
 considering the projections $\overline{\pi}_i\colon S\rightarrow S_i$, we have that
 \begin{equation}
  S_\alpha=S_\alpha\overline{\pi}_1\times\cdots\times S_\alpha\overline{\pi}_q,
 \end{equation}
 where each $S_\alpha\overline{\pi}_i=D_i$.
 By~\cite[Theorem~11.13]{bookcsaba}, $G$ is permutationally isomorphic
 to a subgroup of a wreath product of the form
$W=G_0\wre S_q$ acting on $\Gamma^q$ in product action
where $G_0$ is a quasiprimitive permutation group on $\Gamma$ of type
{\sc Sd}. Further, setting $S=S_1$, $\soc(G_0)=S$ and
$\soc(G)=\soc(W)=S^q=S_1\times\cdots\times S_q$.
By Corollary~\ref{embed}, $H\leq\norm{G}{S_i}$ for all $i$, and
hence $H$ lies in the base group $(G_0)^q$ of $W$.

For $i\in\interv q$, let $\sigma_i:(G_0)^q\rightarrow G_0$ denote the $i$-th projection map. Then $H\sigma_i$ is a transitive characteristically simple
subgroup of $G_0$ acting on $\Gamma$. Since $G_0$ is quasiprimitive of
{\sc Sd} type, we find that $H\sigma_i\leq \soc(G_0)=S$. Since this is true
for all $i$, we obtain that $H\leq S^q=\soc(G)$.

\ves
This concludes the proof of Theorem \ref{main}.
We note that the cases when $G$ has type HS or HC could have been reduced
to the types {\sc Sd} and {\sc Cd}, respectively, as explained at
the end of Section~\ref{quasiprim}. However, we chose not to do this, since
these arguments are significantly easier than the one given for the type
{\sc Sd}.

\section{The proofs of Theorem~\ref{theorem3}
and Corollaries~\ref{cor1} and~\ref{cor3}}\label{sectioninclusions}

In this section we prove Theorem~\ref{theorem3} and Corollaries~\ref{cor1} and~\ref{cor3}.

\begin{proof}[The proof of Theorem~\ref{theorem3}]
(0) Suppose that $G\leq\sym(\Omega)$ has type HA. Then $G$ has a unique minimal normal
subgroup $V$ which is elementary abelian of order $p^d$.
Suppose that $H=T^k$ is a nonabelian transitive
characteristically simple subgroup of $G$. Then $H\cap V$ is a normal subgroup
of $H$ which is elementary abelian, and so, by Lemma~\ref{scottlemma}(2),
$H\cap V=1$. Hence one may consider
the group $X=V\rtimes H$. Since both $V$ and $H$ are transitive on
$\Omega$, so is $X$.

Though the group $X$ is transitive on $\Omega$, it may be imprimitive.
Suppose that $\Delta$ is a maximal proper $X$-block
in $\Omega$ (if $X$ is primitive, then $\Delta$ is a singleton) and let $\mathcal B$ denote the corresponding $X$-invariant
block system.
For a subgroup $Y$ of
$\sym(\Omega)$ preserving $\mathcal B$, let $Y^{\mathcal B}$ denote the permutation group
of $\sym(\mathcal B)$ induced by $Y$. By the maximality of $\Delta$,
$X^{\mathcal B}$ is a primitive group in which $V^{\mathcal B}$ is a transitive
abelian normal
subgroup and $H^{\mathcal B}$ is a transitive nonabelian characteristically
simple subgroup. In particular, $V^{\mathcal B}$ and $H^{\mathcal B}$ are nontrivial
and $V^{\mathcal B}$ is regular. Since $X=VH$, we obtain that
$X^{\mathcal B}=V^{\mathcal B}H^{\mathcal B}$. Furthermore, $V^{\mathcal B}\cap H^{\mathcal B}$ is an elementary
abelian normal subgroup in the nonabelian characteristically simple group
$H^{\mathcal B}$, which gives that $X^{\mathcal B}=V^{\mathcal B}\rtimes H^{\mathcal B}$. As $X^{\mathcal B}$
is primitive of HA type, the conjugation action of $H^{\mathcal B}$ induces
an irreducible linear group on $V^{\mathcal B}$. If $|V^{\mathcal B}|=p^{d_0}$, then
the group $H^{\mathcal B}$ has a transitive permutation representation of degree
$p^{d_0}$ and also a faithful irreducible representation of degree $d_0$ over the field
of $p$ elements. Now~\cite[Theorem~1.1]{barbaraha} implies that
$H^{\mathcal B}\cong T\cong\mbox{SL}(3,2)$, $p=2$, $d_0=3$ and the stabilizer $K$
in $H^{\mathcal B}=T$ is a subgroup of index~eight\footnote{The cited theorem of Baumeister contains a misprint
  and $\mbox{SL}(3,3)$ is written instead of $\mbox{SL}(3,2)$.}. Since, up to conjugacy,
$K$ is the unique subgroup of $T=\mbox{SL}(3,2)$ with $2$-power index,
we find that the stabilizer of a point $\alpha\in\Omega$ in $H$ must
be of the form $K^k$, and so $|\Omega|=8^k$.

In the rest of this proof, let $G$ be a finite quasiprimitive permutation group on $\Omega$ of type HS, HC, A{\scriptsize S}, T{\scriptsize W},
P{\scriptsize A}, S{\scriptsize D} or C{\scriptsize D}, and let $H\cong
T^k$
be a transitive nonabelian characteristically simple subgroup of $G$
where $k\geq 2$ and $T$ is a nonabelian finite simple group.
Assume that $S=Q_1\times\cdots\times Q_r$
is the socle of $G$, where each $Q_i\cong Q$ for a nonabelian simple group $Q$. 
Consider the projections $\pi_i\colon\soc(G)\rightarrow Q_i$ of $\soc(G)$ onto its direct factors.
According to Theorem \ref{main}, $H\leq S$ and we assume that
$H$ does not contain a nontrivial normal subgroup of $\soc(G)$.
If $G$ had type {\sc Tw}, then $S$ would be regular, and no
proper subgroup of $S$
would be transitive. Therefore, under these conditions,
the type of $G$ cannot be {\sc Tw}. The rest of the proof of
Theorem~\ref{theorem3} is by considering each possible O'Nan--Scott type
for $G$.

(1) In this case, $S\cong Q$ is a nonabelian simple group.
For $\alpha\in\Omega$, the factorization $S=HS_\alpha$ holds.
Now~\cite[Theorem 1.4]{badprgprimover} implies that $S=A_n$ with $n\geq 10$
acting naturally on $n$ points and hence $G=A_n$ or $G=S_n$ must follow.

(2) $G$ has type {\sc Sd} or HS.
Since $S$ is transitive on $\Omega$ and $S_\alpha\cong Q$,  $|\Omega|=|Q|^{r-1}$.
For a fixed $j\in\ir$, denote 
$$\overline{Q}_j=Q_1\times\cdots\times Q_{j-1}\times Q_{j+1}\times\cdots\times Q_r.$$
Given ${i_0}\in\ir$, \textit{a priori}, we have three options: $H\pi_{i_0}=1$, $H\pi_{i_0}=Q_{i_0}$ or $1<H\pi_{i_0}<Q_{i_0}$.

We claim, for all $i_0\in\interv r$, that $1<H\pi_{i_0}<Q_{i_0}$.
First we assume that  $H\pi_{i_0}=1$ for some ${i_0}\in\ir$.
  Without loss of generality, assume that ${i_0}=1$.
  Then $H$ is a transitive subgroup of $\overline Q_1$, which is a
  regular subgroup. Since no proper subgroup of $\overline Q_1$ is transitive,
  $H$ would have to be equal to $\overline Q_1$, which is impossible, as
  we assume that $H$ contains no nontrivial normal subgroup of $S$. Hence
  $H\pi_{i_0}\neq 1$ holds for all $i_0\in\interv r$.

Consider now the case in which $H\pi_{i_0}=Q_{i_0}$ for some ${i_0}\in\ir$. 
In this case $Q$ is a composition factor of $H$, which means by the Jordan-H\"older Theorem that $Q\cong T$, and so $k\leq r$.
Further, the transitivity of $H$ implies that  $k=r-1$ or $k=r$.
If $k=r$, then $H=S$, which is impossible in our conditions.
Assume now that $k=r-1$. By the previous paragraph, $H\pi_{i_0}\neq 1$
for all $i_0$, and so $H\pi_{i_0}=Q_{i_0}$ must hold for all $i_0$. Therefore
$H$ is a subdirect subgroup of $S$. By Lemma~\ref{scottlemma}, $H$ must be
the direct product of full strips with pairwise disjoint supports. Since
$k=r-1$, either $r=2$ and $k=1$ or one of these strips must be equal to a direct factor of $S$. The latter possibility cannot hold by our conditions.
In the former case, $Q_1\times Q_2=S_\alpha H$ is a factorization with
two full diagonal subgroups, which is not possible, by Lemma~\ref{uniform2}.

Therefore $1<H\pi_j<Q_j$ must hold for all $j\in\ir$ as claimed. 
We have that $S_\alpha$ is a full diagonal subgroup of $S$. Since $H$ is transitive, $S=HS_\alpha$. Now Lemma~\ref{hab} implies that $k=r=2$, 
$H=T^2=T_1\times T_2$ where $T_1< Q_1$ and $T_2< Q_2$
and $Q$ and $T$ are described in one of the rows of Table~\ref{factiso}. 
Assuming, as we may, that
$$
S_\alpha=\{(q,q\alpha)\mid q\in Q_1\}
$$
for some isomorphism $\alpha:Q_1\rightarrow Q_2$, the same lemma implies
that $Q_1=T_1(T_2\alpha^{-1})$.

(3) $G$ has type {\sc Pa}.
As $S_\alpha$ is nontrivial and is not a subdirect subgroup of $S$,
$1<S_\alpha\pi_i<Q_i$ holds for all $i$ and the projections $S_\alpha\pi_i$
are permuted by $G_\alpha$ transitively. Furthermore, since $H\pi_i$ is a
homomorphic image of $H$, we have that $H\pi_i=T^{s_i}$ with some $s_i\geq 0$.
Since $H$ is transitive, we have that $S_\alpha H=S$, which implies that
$s_i\geq 1$ for all $i\in\interv r$. Further, each minimal normal subgroup
$T_i$ of $H$ is a strip in $S$. Thus we may consider the support $\supp(T_i)$.
  The transitivity of $H$ implies that
  \begin{equation}\label{facq}
    Q_i=(HS_\alpha)\pi_i=(H\pi_i)(S_\alpha\pi_i)=T^{s_i}(S_\alpha\pi_i).
  \end{equation}

\ves
\textbf{Case 1:} Suppose that $s_{i_0}\geq 2$ for some $i_0\in\ir$. 
In this case, the factorization in~\eqref{facq}
and~\cite[Theorem 1.4]{badprgprimover} implies
$Q\cong A_n$ and $S_\alpha\pi_{i_0}\cong A_{n-1}$ where $n\geq 10$.
Since the $S_\alpha\pi_i$ are pairwise isomorphic,
$S_\alpha\pi_{i}\cong A_{n-1}$ holds for all $i\in\ir$.
Set $P=S_\alpha \pi_1\times\cdots\times S_\alpha \pi_r$.
In particular, $P\cong (A_{n-1})^r$.

We claim that $S_\alpha=P$.
Assume that $S_\alpha\neq P$. Since $P$ is a nonabelian characteristically simple group and $S_\alpha$ is a subdirect subgroup of $P$,
by Scott's Lemma (Lemma \ref{scottlemma}), $S_\alpha$ is the direct product of diagonal subgroups
$S_\alpha=D_1\times\cdots\times D_l$
for some $l\leq r$. Renumbering if necessary and using that $S_\alpha\neq P$, we can assume that $\supp(D_1)=\{Q_1,\ldots,Q_m\}$ with
$2\leq m\leq r$. Consider the projection $\overline{\pi}\colon S\rightarrow Q_1\times Q_2$.
Since $S=S_\alpha H$, we have
$Q_1\times Q_2=S\overline{\pi}=(S_\alpha\overline{\pi})(H\overline{\pi})$,
where $S_\alpha\overline{\pi}=\{(x,x\alpha)\mid x\in S_\alpha\pi_1\}$ for
an isomorphism $\alpha:S_\alpha\pi_1\rightarrow S_\alpha\pi_2$.
Since $n\geq 10$, the automorphisms of $A_n$ are induced by
conjugations by elements of $S_n$~\cite[2.4.1]{wilson}.
Thus we can extend the isomorphism $\alpha$ to an isomorphism
$\overline\alpha$ between $Q_1$ and $Q_2$. Now
$$Q_1\times Q_2=\overline{D}(H\overline{\pi})=\overline{D}(H\pi_1\times H\pi_2),$$
where $\overline{D}=\{(x,x\overline{\alpha})\mid x\in Q_1\}$
is a full diagonal subgroup in $Q_1\times Q_2$.
By Lemma \ref{hab}, the possibilities for $Q$ and $T$ are in
Table~\ref{factiso}.
Since  $Q\cong A_n$ with $n\geq 10$, we obtain a contradiction.
Therefore $S_\alpha=P$, as desired.

Applying the Orbit-Stabilizer Theorem, we see that
$$|\Omega|=\frac{|S|}{|S_\alpha|}=n^r.$$
So if $s_i\geq 2$ for some $i\in\ir$, then $S\cong (A_n)^r$, $S_\alpha\cong(A_{n-1})^r$ and
$|\Omega|=n^r$, where $n\geq 10$. Hence in this case we obtain Theorem~\ref{theorem3}(3)(c).

\ves
\textbf{Case 2:} Suppose that $s_i=1$ for all $i\in\ir$.
Since $T_i$ is simple, we have that $T_i$ is a strip of $S$ for all $i\in\ir$.
Moreover, the supports of the $T_i$ are pairwise disjoint. In fact, if for some $l$
we have $T_i\pi_l\cong T\cong T_j\pi_l$ for distinct $i,j\in\ir$, then
$T^2\cong(T_i\times T_j)\pi_l\leq H\pi_l\cong T$,
which is  absurd. Then the supports $\supp(T_i)$ are pairwise disjoint and we can write 
$$\begin{array}{lcl}
 T_1 & \leq & Q_1\times\cdots\times Q_{l_1},\\
 T_2 & \leq & Q_{l_1+1}\times\cdots\times Q_{l_1+l_2},\\
 \vdots & \vdots & \vdots \\
 T_k & \leq & Q_{l_1+l_2+\cdots+l_{k-1}+1}\times\cdots\times Q_{l_1+l_2+\cdots+l_k}.\\
\end{array}$$

First suppose that $l_i\geq 2$ for some $i\in\ir$. Renumbering, if necessary, assume that $l_1\geq 2$. 
Write $l=l_1$ and
consider the projection map $\overline{\pi}\colon S\rightarrow Q_1\times\cdots\times Q_l$.
As $S=HS_\alpha$, it follows that $(H\overline{\pi})(S_\alpha\overline{\pi})=Q_1\times\cdots\times Q_l$.
Write $L=S_\alpha\pi_1\times\cdots\times S_\alpha\pi_l$. 
Since $H\overline{\pi}=T_1$ and $S_\alpha\overline{\pi}\leq L$,
 $T_1L=Q_1\times\cdots\times Q_l$.
Therefore, $T_1$ is a transitive subgroup of
$Q_1\times\cdots\times Q_l$ under its faithful action
by right multiplication on the right
coset space $[Q_1\times\cdots\times Q_l\colon L]$.
On the other, hand $Q_1\times\cdots\times Q_l$ can be embedded into
the quasiprimitive permutation group $W=Q_1\wre S_l$ acting in product
action on $[Q_1:S_\alpha\pi_1]^l$. Hence $T_1$ is a
transitive simple subgroup of a wreath product in product action. 
According to \cite[Theorem 1.1]{cartesiancsaba}
and~\cite[Theorem~1.1(b)]{innatelywreath}, $l=2$ and $T$ and
$Q$ are as in one of the rows of Table~\ref{table2}.
Thus, in this case, Theorem~\ref{theorem3}(3)(b) is valid.

Now suppose that $l_i=1$ for all $i\in\ir$. Then $k=r$ and $T_i<Q_i$ for all $i\in\ir$.
In this case, Theorem~\ref{theorem3}(3)(a) holds.

(4)
$G$ has type {\sc Cd} or HC.
Lemma \ref{scottlemma} 
gives two sets 
$$
\overline{\Sigma}=\{S_1,\ldots,S_l\}\quad\mbox{and}
\quad \overline{D}=\{D_1,\ldots,D_l\}
$$
 where $l\geq 2$, each $D_i$ is a full diagonal subgroup of $S_i$
 and $S_i=\prod_{Q_j\in\supp(D_i)}Q_j$, such that
 $S=S_1\times\cdots\times S_l$ and 
 $S_\alpha=D_1\times\cdots\times D_l\cong Q^l$.

 We claim that $1<H\pi_j<Q_j$ for all $j\in\ir$.
Suppose that this is not the case and assume first that
$H\pi_{i_0}=Q_{i_0}$ for some $i_0\in\ir$.
In this case $Q$ is a composition factor of $H$, which means by the Jordan-Hölder 
Theorem  that $Q\cong T$, and so $k\leq r$.
Therefore $H$ is the direct product of disjoint full strips in $S$.
Since $H$ is transitive, $S=S_\alpha H$. Now Lemma~\ref{uniform2} implies 
that $H$ must contain a direct factor of $S$. In our case, this is
impossible.

Suppose now that $H\pi_{i_0}=1$ for some $i_0\in\ir$.
Let, for $i\in\interv l$,
$\bar\pi_i$ denote
the coordinate projection $\bar\pi_i:S\rightarrow S_i$.
We may assume that $Q_{i_0}\in\supp(D_1)$. Applying $\bar\pi_1$
to the factorization $S=S_\alpha H$, we obtain that
$S_1=D_1(H\bar\pi_1)$ and $H\bar\pi_1$ is contained in $X=\prod_{Q_i\in\supp(D_1)\setminus\{Q_{i_0}\}}
Q_i$. Now $S_1=D_1X$ is a factorization with $D_1\cap X=1$, which gives
that $H\bar\pi_1=X$. Therefore $H\pi_j=Q_j$ must hold for some
$j\neq i_0$. Hence by the analysis in the previous paragraph,
this is also impossible.
 Therefore $1<H\pi_j<Q_j$ holds for all $j\in\ir$, as claimed.

Considering  the
factorization
$$
S_i=D_i(H\bar\pi_i)
$$
we obtain, as in part~(2),
that $|\supp(D_i)|=2$ for all $i\in\il$. That is, $r$ is even and $S_i\cong Q^2$,
$H\pi_i\cong T$
where and $Q$ and $T$ are as in one of the rows of Table~\ref{factiso}.

Renumbering, if necessary, assume that 
$$S=\underbrace{Q_1\times Q_2}_{S_1}\times\underbrace{Q_3\times Q_4}_{S_2}\times\cdots\times\underbrace{Q_{r-1}\times Q_r}_{S_{r/2}}.$$

We claim that $H=H\pi_1\times\cdots\times H\pi_r$.
It suffices to prove that $H\pi_i\leq H$ for all $i\in\ir$. So assume the opposite,
that is, $H\pi_{i}\not\leq H$ for some $i\in\ir$. Then $H$ has a nontrivial strip $X\cong T$ such that
$|\supp(X)|\geq 2$.

We claim that $\supp(D_i)\nsubseteq\supp(X)$ for each $i\in\il$. 
To prove the claim,  assume that $Q_1,Q_2\in\supp(X)$
and consider the projection $\pi_1\colon S\rightarrow S_1$. As $S=S_\alpha H$, 
$$S_1=S\overline{\pi}_1=(S_\alpha \overline{\pi}_1)(H\overline{\pi}_1)=D_1(X\overline{\pi}_1).$$
However, we obtain, analyzing the orders, that
$$|S_1|=|D_1(X\overline{\pi}_1)|=
\frac{|D_1||X\overline{\pi}_1|}{|D_1\cap X\overline{\pi}_1|}=
\frac{|Q||T|}{|D_1\cap X\overline{\pi}_1|}<|Q|^2,$$
which is a contradiction. Thus,
$\supp(D_i)\nsubseteq\supp(X)$ for all $i\in\il$.
Again, renumbering if necessary, we may assume that $Q_2,Q_3\in\supp(X)$.
Considering the projection $\overline{\pi}\colon S\rightarrow S_1\times S_2$,
we have
\begin{equation}\label{s1s2}
S_1\times S_2=S\overline{\pi}=(S_\alpha \overline{\pi})(H\overline{\pi})=(D_1\times D_2)(H\overline{\pi}),
\end{equation}
where $H\overline{\pi}$ is contained in a subgroup $\overline{H}$ of $S_1\times S_2$ that is isomorphic to $T^3$. 
Set $P=(D_1\times D_2)\cap\overline H$. Then
$|Q|^4=|Q|^2|T|^3/|P|$,
thus
\begin{equation}\label{orderint}
 |P|=\frac{|T|^3}{|Q|^2}.
\end{equation}
Suppose that $Q\cong A_6$ and $T\cong A_5$. Then, by \eqref{orderint},
$|P|=5/3$, which is impossible.
If $Q\cong M_{12}$ and $T\cong M_{11}$, then the claim follows from
Lemma \ref{lemmam12}.

Assume that $Q\cong \po_8^+(q)$ and $T\cong \Omega_7(q)$. Then by (\ref{orderint})
$$|P|=q^3\cdot\frac{(q^6-1)\gcd(4,q^4-1)^2}{(q^2+1)\gcd(2,q-1)^3}.$$
We prove that there exists an odd prime that divides $q^2+1$ but does not divide $q^3\cdot (q^6-1)$.
If $q$ is even, then $q^2+1$ is odd, and so there exists an odd prime $p$ that divides $q^2+1$.
On the other hand, if $q$ is odd, then $q^2+1\equiv 2\mbox{ (mod $4$)}$.
Thus $q^2+1$ is even but it is not a 2-power, so there exists an odd prime $p$ that divides $q^2+1$.
Therefore, in both  cases, there exists an odd prime $p$ that divides $q^2+1$. 
We claim that $p$ does not divide $q^3\cdot (q^6-1)$. Since $p$ is odd,  $p$ does
not divide $q^2-1$. As $q^6-1=(q^2-1)(q^4+q^2+1)$, and 
$p$ does not divide $q^2-1$ but divides $q^2+1$, we find that $p$ does not divide $q^6-1$.
Hence $p$ is a prime we are looking for. This means that also 
$|P|$ is not an integer, which is  impossible. 

Therefore, $H=H\pi_1\times\cdots\times H\pi_r$ as asserted.

Thus  $1<H\pi_i<Q_i$ for all $i\in\ir$, $k=r$
and we may assume after possibly reordering the $T_i$ that
$T_i<Q_i$ for all $i$. The proof of Theorem~\ref{theorem3} is now complete.
\end{proof}

Now it only remains to prove Corollaries~\ref{cor1} and~\ref{cor3}.

\begin{proof}[The proof of Corollary~\ref{cor1}]
  In part (0) of Theorem~\ref{theorem3}, $H$ is not regular.
  In part~(2), $H=T_1\times T_2\leq Q^2$ such that $T_1T_2=Q$. The stabilizer
  of $H$ is isomorphic to $T_1\cap T_2$ which, as can be verified
  in each of the lines of Table~\ref{factiso}, is nontrivial. Hence
  $H$ is nonregular. Since, in case~(4), the group $G$ is a blow-up of a
  group that appears in case~(2), $H$ is nonregular in case~(4) also.
  Hence $G$ and $H$ must be as either in
  Theorem~\ref{theorem3}(1) or in Theorem~\ref{theorem3}(3) and the
  O'Nan--Scott type of $G$ is either {\sc As} or {\sc Pa}.
  \end{proof}

\begin{proof}[The proof of Corollary~\ref{cor3}]
  Let $H$, $G$ and $\Gamma$ be as in Corollary~\ref{cor3}. Then $H$ is
  a regular nonabelian and characteristically simple subgroup of the
  quasiprimitive group $G$ acting on the
  vertex set of $\Gamma$. Noticing that if $G$ is as in
  Theorem~\ref{theorem3}(1), then $G$ is either an alternating or a
  symmetric group, and so $\Gamma$, in this case, would be a
  complete graph, the result follows from Corollary~\ref{cor1}.
  \end{proof}

\bibliographystyle{alpha}

\newcommand{\etalchar}[1]{$^{#1}$}

\end{document}